\def\ps@pprintTitle{%
 \let\@oddhead\@empty
 \let\@evenhead\@empty
 \def\@oddfoot{\centerline{\thepage}}%
 \let\@evenfoot\@oddfoot}
\newcommand{\eps}{\varepsilon}
\newtheoremstyle{slplain}
  {0.4cm}
  {0.4cm}
  {\upshape}
  {}
  {\bfseries}
  {.}
  { }
  {}
\newtheoremstyle{itplain}
    {0.4cm}
    {0.4cm}
    {\itshape}
    {}
    {\bfseries}
    {.}
    { }
    {}
\declaretheorem[style=slplain,numberwithin=section]{definition}
\declaretheorem[style=slplain,sibling=definition]{remark}
\declaretheorem[style=itplain,sibling=definition]{theorem}
\declaretheorem[style=itplain,sibling=definition]{proposition}
\declaretheorem[style=itplain,sibling=definition]{lemma}
\patchcmd{\MaketitleBox}{\footnotesize\itshape\elsaddress\par\vskip36pt}{\footnotesize\itshape\elsaddress\par\parbox[b][36pt]{\linewidth}{\vfill\hfill\textnormal{May 5, 2017}\hfill\null\vfill}}{}{}%
\patchcmd{\pprintMaketitle}{\footnotesize\itshape\elsaddress\par\vskip36pt}{\footnotesize\itshape\elsaddress\par\parbox[b][36pt]{\linewidth}{\vfill\hfill\textnormal{May 5, 2017}\hfill\null\vfill}}{}{}%
\begin{document}

\begin{frontmatter}

\title{\mbox{ }\\[0.5cm] Barrier Option Pricing under the\\ 2-Hypergeometric Stochastic Volatility Model}

\author[myaddress0,myaddress1]{Rúben Sousa\corref{mycorrespondingauthor}}
\cortext[mycorrespondingauthor]{Corresponding author}
\ead{ruben.azevedo.sousa@tecnico.ulisboa.pt}

\author[myaddress2]{Ana Bela Cruzeiro}
\ead{abcruz@math.tecnico.ulisboa.pt}

\author[myaddress3]{Manuel Guerra}
\ead{mguerra@iseg.ulisboa.pt}

\address[myaddress0]{Centro de Matemática, Faculdade de Ciências, Universidade do Porto, Rua do Campo Alegre 687, 4169-007 Porto, Portugal}
\address[myaddress1]{Instituto Superior Técnico, Universidade de Lisboa, Av.\ Rovisco Pais, 1049-001 Lisbon, Portugal}
\address[myaddress2]{GFMUL and Departamento de Matemática, Instituto Superior Técnico, Universidade de Lisboa,\linebreak Av.\ Rovisco Pais, 1049-001 Lisbon, Portugal}
\address[myaddress3]{CEMAPRE and ISEG (School of Economics and Management), Universidade de Lisboa,\linebreak Rua do Quelhas, 1200-781 Lisbon, Portugal}

\begin{abstract}
We investigate the pricing of financial options under the 2-hypergeometric stochastic volatility model. This is an analytically tractable model that reproduces the volatility smile and skew effects observed in empirical market data.

Using a regular perturbation method from asymptotic analysis of partial differential equations, we derive an explicit and easily computable approximate formula for the pricing of barrier options under the 2-hypergeometric stochastic volatility model. The asymptotic convergence of the method is proved under appropriate regularity conditions, and a multi-stage method for improving the quality of the approximation is discussed. Numerical examples are also provided.
\end{abstract}

\begin{keyword}
Finance \sep Option pricing theory \sep Stochastic volatility \sep Asymptotic analysis \sep Regular perturbation method
\end{keyword}

\end{frontmatter}

\section{Introduction}

Barrier options are options whose payoff does not depend only on the value of the underlying asset at maturity, but also on whether the path of the asset's price touches a given barrier level during the lifetime of the option. These options, which constitute one of the oldest types of exotic options, have become increasingly popular in the financial derivative industry because they allow for more flexible payoff schemes than plain vanilla options. It is thus important to construct good barrier option pricing models which are able to reproduce the features observed in real market data.

The simplest model for the pricing of financial derivatives is the Black and Scholes model, in which the price of all the standard barrier call and put options can be written in closed form. However, it is widely known that the strong assumptions of this model are unrealistic. In particular, the constant volatility assumption is clearly incompatible with the so-called smile and skew patterns which are generally present in empirical option prices.

A natural way to address this issue is to introduce randomness in the volatility. For this reason, option pricing under stochastic volatility has been the subject of a great deal of research in recent years. Here we focus on the 2-hypergeometric stochastic volatility model, which was introduced by Da Fonseca and Martini \cite{fonseca2015} as a model which ensures that the volatility is strictly positive --- this is an important property which is not present in some other well-established stochastic volatility models. In a very recent paper, Privault and She \cite{privaultshe2016} demonstrated that, under this model, a closed-form asymptotic vanilla option pricing formula can be determined through a regular perturbation method. This is a notable result because their formulas are analytically very simple, which is rarely the case in models with stochastic volatility: as discussed by Zhu \cite{zhu2010}, the higher complexity of these models usually yields the need for rather sophisticated numerical implementations.

The literature on barrier option pricing methods is extensive. Exact closed-form pricing formulas have been derived for only a few models other than that of Black and Scholes, none of which reproduces satisfactorily the market phenomena. (For explicit formulas under one-dimensional  models see e.g.\ Davydov and Linestky \cite{davydov2003}, Hui and Lo \cite{hui2006}; for an explicit solution under the Heston stochastic volatility model see Lipton \cite{lipton2001}.) Given the unavailability of explicit formulas, to price barrier options under more complex models one needs to resort to numerical methods. The main approaches are the use of numerical partial differential equation (PDE) techniques and of Monte Carlo methods (we refer the reader to the books of Seydel \cite{seydel2012} and of Glasserman \cite{glasserman2004}), which are often combined with other analytical or numerical techniques (for recent work see for instance Zhang et al.\ \cite{zhang2014}, Guardasoni and Sanfelici \cite{guardasoni2016}). Unfortunately, the computational times are nowadays still largely incompatible with the demands of the financial industry.

An alternative strategy for pricing under more general models is to derive approximate (or asymptotic) analytical solutions: this has been proposed not only for vanilla options (cf.\ Privault and She \cite{privaultshe2016}) but also for barrier options, see e.g.\ Fouque et al.\ \cite{fouque2000}, Alos et al.\ \cite{alos2016}. These asymptotic methods are intrinsically computationally much less expensive than the numerical methods mentioned in the previous paragraph. Indeed, numerical PDE techniques usually rely on space-time discretization and on the numerical solution of linear systems of high dimension, while Monte Carlo methods require the simulation of a large number of sample paths on a suitably fine time grid; on the other hand, the asymptotic techniques only require the computation of a few integrals (the number of such integrals is small and does not depend on the discretization). Thus the key question when dealing with asymptotic solutions is whether they are sufficiently exact for practical purposes.

Despite the vast body of work in this area, the pricing of barrier and other exotic options under the 2-hypergeometric model has to our knowledge never been studied. Motivated by this, we extend the regular perturbation approach of Privault and She in order to derive an asymptotic pricing formula for barrier-type options. We show that, for a given class of nonconstant barrier functions, an explicit asymptotic formula can be obtained and its convergence can be proved with the help of the Feynman-Kac theorem for Cauchy-Dirichlet problems for parabolic PDEs. Given that in general our class of barrier functions does not include constant functions, the choice of a nonconstant barrier function which approximates a certain constant barrier level is discussed. We also present some numerical examples which indicate that the accuracy of our asymptotic formulas is high enough for the applications.

This paper is organized as follows. In Section \ref{chap:barrier_stochvolprice} we introduce the class of barrier options which we consider, and we present the formulation of the barrier option pricing problem under a generic stochastic volatility model. Section \ref{chap:main}, the main section of this paper, develops the asymptotic pricing approach for barrier-type options: the first-order small volatility expansion is carried out in Subsection \ref{sec:expansion}, the explicit expressions for the zero and first-order terms are derived in Subsections \ref{sec:zeroorder} and \ref{sec:firstorder} respectively, the proof of the convergence of the asymptotic solution is provided in Subsection \ref{sec:conv_proof}, and a generalization of the method to a wider class of barrier functions is given in Subsection \ref{sec:multistage}. In Section \ref{chap:numerics} we present some numerical results to corroborate our theoretical findings. Section \ref{chap:conclusions} summarizes the main conclusions. The appendices contain some auxiliary results.

\section{Barrier option pricing under stochastic volatility} \label{chap:barrier_stochvolprice}

This work focuses on the pricing of \emph{down-and-out call} (DOC) options, which are one of the eight types of standard barrier options. The techniques used in this paper may also be applied to other types of barrier options, such as options with up barriers or put payoffs. The payoff of a DOC option with maturity $T$ is
\[
(S_T - K)^+\, \mathds{1}_{\{S_t > H\text{ for all }0 \leq t \leq T\}},
\]
i.e, it has the usual vanilla call payoff if the asset price process $S$ does not go below the barrier $H$ during the lifetime of the option, and it is worthless otherwise. The DOC option is said to be \textit{regular} if $K \geq H$ and \textit{reverse} if $K < H$. If a barrier function $H(t)$ is considered instead of a constant barrier $H$, the DOC option is called \textit{time-dependent}.

For the sake of generality, let us begin by assuming that the asset process is governed (under the physical measure $\mathbb{P}$) by a Markovian stochastic volatility model of the form
\begin{equation} \label{eq:stochvol_genSDE}
\begin{aligned}
d S_t & = \mu(t,S_t) S_t\, dt + g(V_t) S_t\, dW_t^1 \\[-0.52mm]
d V_t & = a(t,V_t)\, dt + b(t,V_t)\, d W_t^2
\end{aligned}
\end{equation}
where $S$ is the asset price process, $V$ is the volatility process, $W^1$ and $W^2$ are Brownian motions with correlation $\rho \neq \pm 1$, and $g$ is a smooth, positive and increasing function. This is a general family of models which includes the 2-hypergeometric model addressed in the main section of this paper, as well as the Heston model and other popular stochastic volatility models.

It is worth stressing that, unlike the Black and Scholes model, the family of models \eqref{eq:stochvol_genSDE} is able to reproduce the smile and skew effects in implied volatility structures (see Fouque et al.\ \cite{fouque2000} for an introduction to these concepts).

Under standard assumptions on the financial market, it is known that stochastic volatility models are incomplete and, accordingly, there exist infinitely many risk-neutral measures under which arbitrage-free pricing can be performed. Indeed, if the asset pays no dividends and the riskless (deterministic and time-dependent) interest rate is $r(t)$, then for any sufficiently regular deterministic function $\eta(t,x,v)$ the formula
\begin{equation} \label{eq:stochvol_mart_approach}
f^{(\eta)}(t,x,v) = e^{-\int_t^T r(u)\, du}\, \mathbb{E}_{\mathbb{Q}^{(\eta)}} \bigl[ Y \bigm| S_t = x, V_t = v \bigr]
\end{equation}
allows us to compute the arbitrage-free price at time $t$ of all contingent claims $Y$ with maturity at future time $T$. Here $\mathbb{Q}^{(\eta)}$ is a risk-neutral measure equivalent to $\mathbb{P}$ under which the dynamics of the process $(S,V)$ are given by
\begin{equation} \label{eq:stochvol_QetaSDE}
\begin{aligned}
d S_t & = r(t) S_t\, dt + g(V_t) S_t\, d\widehat{W}_t^1 \\[-0.52mm]
d V_t & = \bigl[a(t,V_t) - b(t,V_t) \Lambda_t \bigr]\, dt + b(t,V_t)\, d \widehat{W}_t^2
\end{aligned}
\end{equation}
where $\Lambda_t \equiv \Lambda(t,S_t,V_t)$ is defined as
\[
\Lambda(t,S_t,V_t) := \rho\, {\mu(t,S_t) - r(t) \over g(V_t)} + \sqrt{1-\rho^2}\, \eta(t,S_t,V_t)
\]
and $\widehat{W}_t^1$, $\widehat{W}_t^2$ are $\mathbb{Q}^{(\eta)}$-Brownian motions with correlation $\rho$.

The process $\eta(t,S_t,V_t)$, which is the so-called \emph{market price of volatility risk}, cannot be identified within the stochastic volatility model, so it must be exogenously specified. Unfortunately, there is no easy criterion for picking the right functional form for $\eta$; consequently, a common practice is to judiciously choose $\eta$ such that the resulting pricing problem is analytically tractable (see Section 2.7 of Fouque et al.\ \cite{fouque2000} and Section 10.9 of Lipton \cite{lipton2001}).

A DOC option is just a contingent claim $Y = (S_T - K)^+\, \mathds{1}_{\{\tau_H > T\}}$ where $\tau_H := \inf\{u \geq t: S_u \leq H \}$. This means that we can use Equation \eqref{eq:stochvol_mart_approach} for the pricing of DOC (and other barrier) options under the stochastic volatility model \eqref{eq:stochvol_QetaSDE} --- this is known as the \emph{martingale approach} to the pricing problem. Moreover, the barrier option price $f^{(\eta)}(t,x,v)$ can also be computed through a \emph{PDE approach}: by virtue of the Feynman-Kac theorem for Cauchy-Dirichlet problems for parabolic PDEs (Theorem \ref{thm:feynmankac_cdirich_rubio} in \ref{chap:ap_feynmankac}), $f^{(\eta)}$ is a solution of the two-space-dimensional terminal and boundary value problem
\begin{equation}
\begin{aligned}
\!\biggl({\partial \over \partial t} + \mathcal{L}^{(\eta)}\!\biggr) f^{(\eta)}(t,x,v) = 0, && \;\;\;\; t \in [0,T],\; x > H \\[-0.5mm]
f^{(\eta)}(T,x,v) = (x - K)^+, && x > H \\
f^{(\eta)}(t,H,v) = 0, && t \in [0,T]
\end{aligned}
\end{equation}
where
\begin{align*}
\mathcal{L}^{(\eta)} = \, & {1 \over 2} g^2(v) x^2 {\partial^2 \over \partial x^2} + \rho\, b(t,v) x g(v) {\partial^2 \over \partial x \partial v} + {1 \over 2} b^2(t,v) {\partial^2 \over \partial v^2} \\
& \!\!\! + r(t) x {\partial \over \partial x} + \bigl[ a(t,v) - b(t,v) \Lambda(t,x,v) \bigr] {\partial \over \partial v} - r(t)\, \textrm{Id}.
\end{align*}

The adaptation of these two pricing approaches to time-dependent barrier options is simple: we just need to redefine the stopping time as $\tau_H \!:=\! \inf\{u \geq t: S_u \leq H(u) \}$ and to replace the boundary condition of the PDE problem by $f^{(\eta)}(t,H(t),v) = 0$. It is also easy to generalize further to the case of options whose (down) barrier $H(t,v)$ depends both on the time and on the (random) volatility: the stopping time becomes $\tau_H := \inf\{u \geq t: S_u \leq H(u,V_u) \}$ and the boundary condition becomes $f^{(\eta)\!}(t,H(t,v),v) \!=\! 0$. We will be dealing with this more general class of barrier options in the next section because time and volatility-dependent barrier options will turn out to be very useful for the derivation of an approximate pricing formula for options with constant barriers.

\section{An asymptotic expansion approach to barrier option pricing} \label{chap:main}

In this section we will tackle the problem of pricing barrier options under the 2-hypergeometric stochastic volatility --- a particular case of the $\alpha$-hypergeometric stochastic volatility model which was defined by Da Fonseca and Martini \cite{fonseca2015} as follows:

\begin{definition}
The \emph{$\alpha$-hypergeometric stochastic volatility model} is the Markovian diffusion model with dynamics
\begin{equation} \label{eq:hypergeo_def_dyn}
\begin{aligned}
dS_t & = r(t) S_t dt + e^{V_t} S_t dW_t^1\\
dV_t & = \left(a-{c \over 2} e^{\alpha V_t}\right) dt + \theta\, dW_t^2
\end{aligned}
\end{equation}
where $W^1$ and $W^2$ are Brownian motions with correlation $\rho$, and $c, \alpha, \theta > 0$, $a \in \mathbb{R}$ are constants.
\end{definition}

Like Da Fonseca and Martini \cite{fonseca2015}, we assume that the model is given directly under a risk-neutral measure $\mathbb{Q}$. The deterministic function $r(t)$ represents the (possibly time-dependent) interest rate, while the parameters $a$ and $c$ can be used to set the market price of volatility risk.

It is important to emphasize that the formulation of the $\alpha$-hypergeometric stochastic volatility model given by Da Fonseca and Martini \cite{fonseca2015} and by Privault and She \cite{privaultshe2016} does not include the drift term $r(t) S_t\, dt$. If, as in these two papers, the goal is to price vanilla options, then such a zero interest rate assumption does not entail any loss of generality because the general case of a nonzero interest rate can be reduced to the case $r(t)=0$ by rewriting the pricing equation \textit{in forward terms} (cf.\ e.g.\ Subsection 9.2.1 of Lipton \cite{lipton2001}). However, this argument breaks down when dealing with barrier options, so the model with nonzero drift must be considered for our barrier option pricing problem.

So as to lighten the notation, we will henceforth assume that the interest rate is constant, i.e, $r(t) \equiv r$ and therefore $\int_t^u r(s)\, ds = r(u-t)$. We will also assume that the asset pays no dividends; as usual, the extension to assets with a continuously paid deterministic dividend is straightforward.

\subsection{The small vol of vol expansion} \label{sec:expansion}

Our approach to the barrier option pricing problem is based on a PDE regular perturbation method --- known as the \emph{small vol of vol asymptotic expansion} --- which consists in rewriting the model as a perturbed Black and Scholes model so as to derive a series expansion of the exact stochastic volatility price around the Black and Scholes price, which should converge when the perturbation parameter tends to zero. Our first step is thus to take the 2-hypergeometric model \eqref{eq:hypergeo_def_dyn} and replace the constant $\theta$ by a small parameter $\eps > 0$:
\begin{equation} \label{eq:2hyp_dyn_drift_smallvol}
\begin{aligned}
dS_t^\eps & = r S_t^\eps dt + e^{V_t^\eps} S_t^\eps dW_t^1 \\
dV_t^\eps & = \left(a-{c \over 2} e^{2 V_t^\eps}\right) dt + \eps \theta dW_t^2.
\end{aligned}
\end{equation}
We will assume that $a>0$ so as to assure that the log-volatility process is mean-reverting.

It is worth pointing out that a somewhat more general approach consists in replacing $\theta$ by a generic function $\eps \psi(t,v)$. The more general case is handled in essentially the same way (cf.\ \cite{thesis}).

Let $\hat{h}(t,v)$ be a generic time and volatility-dependent barrier function, to be specified later. The (exact) price $\hat{f}^\eps(t,x,v)$ of the DOC option with barrier function $\hat{h}(t,v)$ under the model \eqref{eq:2hyp_dyn_drift_smallvol} is defined (in the PDE approach) as the solution of the terminal and boundary value problem
\begin{equation} \label{eq:pde_exactbarrierprice}
\begin{aligned}
\!\biggl({\partial \over \partial t} + \mathcal{L}^\eps\biggr) \hat{f}^\eps (t,x,v) = 0&, &\;\;\;\; t \in [0,T],\, x > \hat{h}(t,v)
\\
\hat{f}^\eps(T,x,v)  = (x-K)^+&, & x > \hat{h}(T,v) \\
\hat{f}^\eps(t,\hat{h}(t,v),v) = 0&, & t \in [0,T]
\end{aligned}
\end{equation}
where 
\begin{equation}
\label{eq:PDEexpansion_L0L1L2} \begin{gathered} \mathcal{L}^\eps = \mathcal{L}_0 + \eps \mathcal{L}_1 + \eps^2 \mathcal{L}_2, \\[2pt]
\mathcal{L}_0 = \left(a-{c \over 2}e^{2v}\right)\!{\partial \over \partial v} + {x^2 \over 2} e^{2v}{\partial^2 \over \partial x^2} + rx {\partial \over \partial x} - r\, \textrm{Id}, \qquad \mathcal{L}_1 = \rho x e^v {\partial^2 \over \partial x \partial v}, \qquad \mathcal{L}_2 = {1 \over 2} {\partial^2 \over \partial v^2}.
\end{gathered}
\end{equation}

Let us now formally assume that the price $\hat{f}^\eps(t,x,v)$ can be asymptotically expanded as $\hat{f}^\eps = \hat{f}_0 + \eps \hat{f}_1$ $+ \eps^2 \hat{f}_2 + \ldots$. Substituting this expansion into the terminal and boundary value problem \eqref{eq:pde_exactbarrierprice} and equating the terms of order $\eps^0$, $\eps^1$, $\eps^2$, $\ldots$, we obtain the system of PDEs
\begin{equation} \label{eq:regpert_PDE_system}
\begin{gathered}
{\partial \hat{f}_0 \over \partial t} + \mathcal{L}_0 \hat{f}_0 = 0, \qquad {\partial \hat{f}_1 \over \partial t} + \mathcal{L}_0 \hat{f}_1 + \mathcal{L}_1 \hat{f}_0 = 0, \qquad {\partial \hat{f}_2 \over \partial t} + \mathcal{L}_0 \hat{f}_2 + \mathcal{L}_1 \hat{f}_1 + \mathcal{L}_2 \hat{f}_0 = 0, \qquad \ldots
\end{gathered}
\end{equation}
with terminal conditions $\hat{f}_0(T,x,v) = (x-K)^+$ and $\hat{f}_j(T,x,v) = 0$ for $j=1,2\ldots$, and with boundary conditions $\hat{f}_j(t,\hat{h}(t,v),v) = 0$ for $j=0,1,2,\ldots$.

We intend to derive the first-order approximation for the option price and to prove that (under suitable regularity conditions) it converges in the sense that 
\begin{equation} \label{eq:pde_asymptoticexp}
\hat{f}^\eps(t,x,v) = \hat{f}_0(t,x,v) + \eps \hat{f}_1(t,x,v) + \mathcal{O}(\eps^2)
\end{equation}
when $\eps$ goes to zero, uniformly with respect to $(t,x,v)$ on compact subsets of $[0,T] \times \mathbb{R}^+ \times \mathbb{R}$.

\subsection{The zero-order term} \label{sec:zeroorder}

The zero-order term $\hat{f}_0(t,x,v)$ is defined as the solution of the terminal and boundary value problem
\begin{equation} \label{eq:pde_zeroorder}
\begin{aligned}
\!\biggl({\partial \over \partial t} + \mathcal{L}_0\biggr) \hat{f}_0 (t,x,v) = 0&, &\;\;\;\; t \in [0,T],\, x > \hat{h}(t,v)
\\
\hat{f}_0(T,x,v)  = (x-K)^+ &, & x > \hat{h}(T,v) \\
\hat{f}_0(t,\hat{h}(t,v),v) = 0 &, & t \in [0,T]
\end{aligned}
\end{equation}
In other words, $\hat{f}_0$ is simply the option price corresponding to the limiting case $\eps = 0$. The equivalent definition of this option price under the martingale pricing framework is
\begin{equation} \label{eq:zeroord_nonzerodrift_mart1}
\hat{f}_0(t,x,v) = e^{-r(T-t)} \mathbb{E}\Bigl[ (S_T^{t,v} - K)^+\, \mathds{1}_{\{ \tau_{\hat{h}} \geq T \}} \Bigm| S_t^{t,v} = x \Bigr]
\end{equation}
where $\tau_{\hat{h}} = \inf\{u \geq t: S_u^{t,v} \leq \hat{h}(u,V_u^{t,v})\}$ and $\{(S_u^{t,v}, V_u^{t,v})\}_{u \in [t,T]}$ denotes the diffusion process whose dynamics are given by the noiseless limit $\eps = 0$ of the model \eqref{eq:2hyp_dyn_drift_smallvol}. The (degenerate) log-volatility process $V_u^{t,v}$ is therefore the deterministic function of time which solves the ordinary differential equation $d V_u^{t,v} = (a - {c \over 2} e^{2V_u^{t,v}})\, du$ with initial condition $V_t^{t,v} = v$; the explicit solution is
\begin{equation} \label{eq:volatilityproc_degen}
V_u^{t,v} = v + a(u-t) - {1 \over 2} \log\biggl(\! 1+{c \over 2a} e^{2v}(e^{2a (u-t)} - 1)\!\biggr).
\end{equation}
In turn, $\{S_u^{t,v}\}_{u \in [t,T]}$ is simply a geometric Brownian motion with constant drift $r$ and time-dependent deterministic volatility $e^{V_u^{t,v}}$. 

For a given (fixed) initial time $t = t'$ and initial log-volatility $v = v'$, by recalling the obvious semigroup property $V_u^{t,V_t^{t',v'}}\!\! = V_u^{t',v'}$ ($t' \leq t \leq u$) we see that
\begin{equation} \label{eq:zeroord_nonzerodrift_mart2}
\hat{f}_0 (t,x,V_t^{t',v'}\!) = e^{-r(T-t)} \mathbb{E}\Bigl[ (S_T^{t',v'}\! - K)^+\, \mathds{1}_{\{ \tau_{\hat{h}} \geq T \}} \Bigm| S_t^{t',v'}\!\! = x \Bigr]
\end{equation}
where $\tau_{\hat{h}} = \inf\{u \geq t: S_u^{t',v'} \leq \hat{h}(u,V_u^{t',v'})\}$. The function $\hat{f}_0(t,x,V_t^{t',v'})$, which only depends on the variables $t$ and $x$, is clearly the definition of the price of a DOC option under a Black and Scholes model where the interest rate is $r$, the time-dependent deterministic volatility is $e^{V_u^{t',v'}}\!$, $u \in [t',T]$ and the time-dependent barrier function is $\hat{H}(u) \equiv \hat{h}(u,V_{u}^{t',v'})$, $u \in [t',T]$.

The barrier option pricing problem under the Black and Scholes model $dS_t = \mu(t) S_{t\,} dt + \sigma(t) S_{t\,} dW_t$ has been studied in the literature. Rapisarda \cite{rapisarda2005} and Dorfleitner et al.\ \cite{dorfleitner2008} showed that the conditional expectation \eqref{eq:zeroord_nonzerodrift_mart2} can be written in closed form provided the barrier function is of the form
\[
\hat{H}(u) = H_1 \exp\left\{ -\int_u^T \biggl( \mu(s) - {1 + 2\beta \over 2} \sigma^2(s)\! \biggr) ds \right\}
\]
for $u \in [t',T]$, where $\beta \in \mathbb{R}$ and $H_1 > 0$ are parameters. In our case, this reduces to
\begin{equation} \label{eq:nonzerodrift_approxbar_beta}
\hat{H}(u) = H_1 \exp\left\{ - r(T-u) + {1 + 2\beta \over 2} \gamma^2(u,T,V_u^{t',v'}) \right\}
\end{equation}
where
\[
\gamma^2(t,u,v) :=\! \int_t^u e^{2V_s^{t,v}} ds\, = {1 \over c} \log\Bigl( 1 + {c \over 2a} e^{2v}(e^{2a(u-t)} - 1 )\!\Bigr).
\]
Unfortunately, to the best of our knowledge, an explicit expression for $\hat{f}_0(t,x,V_t^{t',v'})$ cannot be obtained unless $\hat{H}(u)$ has this particular functional form. For this reason, until Subsection \ref{sec:conv_proof} we will assume that the barrier function $\hat{h}(t,v)$ takes the specific form
\begin{equation} \label{eq:nonzerodrift_firstbarrierextbeta}
\hat{h}(t,v) = H_1 \exp\left\{ -r(T-t) + {1 + 2\beta \over 2} \gamma^2(t,T,v) \right\}
\end{equation}
in the domain $(t,v) \in [0,T] \times \mathbb{R}$. Given this choice of barrier function, Equation (27) of Rapisarda \cite{rapisarda2005} yields the following result (which can also be directly deduced from the joint law given in \ref{chap:ap_jointlaw}):
\begin{proposition} 
Let $\hat{f}_0(t,x,v)$ be the zero-order term in the expansion \eqref{eq:pde_asymptoticexp}. Then
\begin{equation} \label{eq:zeroord_approx_closed_beta_aw}
\begin{aligned}
\hat{f}_0(t,x,v) = & \, x\, \mathcal{N}(d_1(t,x,v)) - K e^{-r(T-t)} \mathcal{N}(d_2(t,x,v)) \\
& - \biggl({\hat{h}(t,v) \over x}\biggr)^{\!2+2\beta} x\, \mathcal{N}(d_3(t,x,v)) \\
& + \biggl({\hat{h}(t,v) \over x}\biggr)^{\!2\beta} K e^{-r(T-t)} \mathcal{N}(d_4(t,x,v))\\[-7pt]
\end{aligned}
\end{equation}
for $t \in [0,T]$ and $x > \hat{h}(t,v)$, where
\begin{gather*}
d_1(t,x,v) = {1 \over \gamma(t,T,v)} \biggl( \log\Bigl({ x \over K\vee H_1}\Bigr) + r(T-t) + {1\over 2} \gamma^2(t,T,v) \!\biggr), \\[3pt]
d_2(t,x,v) = d_1(t,x,v) - \gamma(t,T,v), \\
d_3(t,x,v) = d_1(t,x,v) + {2 \over \gamma(t,T,v)} \log\biggl({\hat{h}(t,v) \over x}\biggr), \\
d_4(t,x,v) = d_2(t,x,v) + {2 \over \gamma(t,T,v)} \log\biggl({\hat{h}(t,v) \over x}\biggr)
\end{gather*}
and $\mathcal{N}(\cdot)$ is the standard normal cumulative distribution function.
\end{proposition}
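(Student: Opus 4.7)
My plan is to observe that the proposition is essentially a direct specialization of the closed-form formula for time-dependent Black--Scholes DOC options (the Rapisarda/Dorfleitner formula, Equation (27) of \cite{rapisarda2005}), once the particular shape of $\hat h(t,v)$ is inserted. The heart of the argument is therefore the identification of our problem with the appropriate time-dependent Black--Scholes setting.

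The first step is to exploit the martingale representation \eqref{eq:zeroord_nonzerodrift_mart2}. I would fix the arbitrary reference pair $(t',v')$ to be $(t,v)$ itself; by \eqref{eq:volatilityproc_degen} we then have $V_t^{t,v}=v$, so \eqref{eq:zeroord_nonzerodrift_mart2} reduces to
\[
\hat f_0(t,x,v) = e^{-r(T-t)}\,\mathbb{E}\Bigl[(S_T^{t,v}-K)^+\,\mathds{1}_{\{\tau_{\hat h}\geq T\}}\,\Bigm|\,S_t^{t,v}=x\Bigr],
\]
where $\{S_u^{t,v}\}_{u\in[t,T]}$ is a geometric Brownian motion with constant drift $r$ and time-dependent deterministic volatility $\sigma(u):=e^{V_u^{t,v}}$, and $\tau_{\hat h}$ is the first passage time below the deterministic barrier $\hat H(u):=\hat h(u,V_u^{t,v})$. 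This places us squarely in the time-dependent Black--Scholes barrier framework of \cite{rapisarda2005,dorfleitner2008}.

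The next step is to verify that with the prescribed choice \eqref{eq:nonzerodrift_firstbarrierextbeta} of $\hat h$, the induced deterministic barrier $\hat H(u)$ is exactly of the form for which the closed-form is available. Using the semigroup identity $V_s^{t,v}=V_s^{u,V_u^{t,v}}$ for $s\geq u$, I would show
\[
\int_u^T \sigma^2(s)\,ds \;=\; \int_u^T e^{2V_s^{t,v}}\,ds \;=\; \gamma^2\!\bigl(u,T,V_u^{t,v}\bigr),
\]
and combine this with $\int_u^T r\,ds = r(T-u)$ to match \eqref{eq:nonzerodrift_approxbar_beta} with $\mu(s)\equiv r$, yielding $\hat H(u)=\hat h(u,V_u^{t,v})$. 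Thus Rapisarda's formula applies verbatim.

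Finally I would substitute into Rapisarda's Eq.~(27). The volatility-time integral $\int_t^T\sigma^2(s)\,ds$ becomes $\gamma^2(t,T,v)$, the spot-adjusted log-moneyness plus drift combines with $\tfrac12\gamma^2(t,T,v)$ to give $d_1$, the reflection factor $(\hat H(t)/x)^{2+2\beta}$ becomes $(\hat h(t,v)/x)^{2+2\beta}$ and likewise for the exponent $2\beta$, and the shift $(2/\gamma(t,T,v))\log(\hat h(t,v)/x)$ in $d_3,d_4$ comes from the standard change of variable used in reflection-principle calculations; the appearance of $K\vee H_1$ instead of $K$ inside $d_1$ reflects the usual decomposition of the DOC payoff into the parts above/below the implied ``rebated'' strike, which is the distinguishing feature of Rapisarda's identity. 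No real obstacle is expected here; the main care is bookkeeping of signs and of the role of $\beta$ in the reflection exponent. As a sanity check and independent route, I would indicate that the same formula can be recovered directly from the explicit joint law of a Brownian motion with drift and its running minimum recalled in \ref{chap:ap_jointlaw}, by passing to the time-changed Brownian motion $\int_t^\cdot e^{V_s^{t,v}}\,dW_s^1$ and applying Girsanov to absorb the drift $r$.
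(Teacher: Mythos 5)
Your proposal is correct and follows essentially the same route as the paper: reduce to a time-dependent Black--Scholes DOC price via the degenerate (deterministic) log-volatility, check that the induced barrier $\hat H(u)=\hat h(u,V_u^{t,v})$ has exactly the exponential form required by Rapisarda's Equation (27), and substitute; the paper even notes the same alternative derivation via the joint law of \ref{chap:ap_jointlaw} that you mention as a sanity check. No gaps.
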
%

If we take the limit $H_1 \to 0$, then the barrier function converges pointwise to zero; consequently, the zero-order term \eqref{eq:zeroord_approx_closed_beta_aw} converges to 
\[
x \mathcal{N}(d_1(t,x,v))- K e^{-r(T-t)} \mathcal{N}(d_2(t,x,v)),
\]
which is precisely the zero-order term for the vanilla option price expansion of Privault and She \cite{privaultshe2016}.

\subsection{The first-order term} \label{sec:firstorder}

The first-order term solves
\begin{equation} \label{eq:pde_firstorder_preFK}
\begin{aligned}
\!\biggl({\partial \over \partial t} + \mathcal{L}_0\biggr) \hat{f}_1 = -\mathcal{L}_1 \hat{f}_0, && \;\;\;\; t \in [0,T],\; x > \hat{h}(t,v) \\
\hat{f}_1(T,x,v) = 0, &&  x > \hat{h}(T,v) \\
\hat{f}_1(t,\hat{h}(t,v),v) = 0, && \qquad t \in [0,T]
\end{aligned}
\end{equation}
where the operators $\mathcal{L}_0$ and $\mathcal{L}_1$ were defined in \eqref{eq:PDEexpansion_L0L1L2}.

The first step towards the computation of an explicit expression for the first order term is to give a stochastic representation formula for the solution of this terminal and boundary value problem:

\begin{lemma} \label{lem:fk_firstorder_1}
Assume that $K \geq H_1$. Then the function
\begin{equation} \label{eq:fk_nonzero_firstorder}
\widetilde{f}_1(t, x, v) = \mathbb{E}\biggl[ \int_{t}^{T \wedge \tau_{\hat{h}}\!} e^{-r(u-t)} \mathcal{L}_1 \hat{f}_0(u,S_u^{t,v}, V_u^{t,v}) \, du \biggm| S_t^{t,v}\! = x \biggr]
\end{equation}
(where the process $(S^{t,v}, V^{t,v})$ and the stopping time $\tau_{\hat{h}}$ are defined as in \eqref{eq:zeroord_nonzerodrift_mart1}) is the unique classical solution of the terminal and boundary value problem \eqref{eq:pde_firstorder_preFK}.
\end{lemma}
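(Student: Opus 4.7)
The plan is to apply the Feynman--Kac representation theorem for inhomogeneous Cauchy--Dirichlet parabolic problems (Theorem \ref{thm:feynmankac_cdirich_rubio} from \ref{chap:ap_feynmankac}) directly to the terminal and boundary value problem \eqref{eq:pde_firstorder_preFK}. Since $\hat{f}_1$ solves a problem with the same parabolic operator $\partial_t + \mathcal{L}_0$ as $\hat{f}_0$ but with zero terminal data, zero boundary data, and inhomogeneous source term $-\mathcal{L}_1 \hat{f}_0$, the theorem should immediately produce the representation \eqref{eq:fk_nonzero_firstorder} (the minus sign is absorbed by the usual Duhamel convention), together with uniqueness in the appropriate class of classical solutions.

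The work therefore reduces to verifying that the hypotheses of that theorem hold in our setting. First, I would record that the coefficients of $\mathcal{L}_0$ and the time/volatility-dependent barrier function $\hat{h}(t,v)$ in \eqref{eq:nonzerodrift_firstbarrierextbeta} are smooth on the relevant domains, and that the zero terminal data and zero Dirichlet data are trivially compatible at the corner of the domain. Second, I would check that the source term $\mathcal{L}_1 \hat{f}_0 = \rho\, x\, e^v\, \partial^2 \hat{f}_0 / (\partial x\, \partial v)$ is H\"older continuous on compact subsets of the open region $\{(t,x,v) : t \in [0,T),\, x > \hat{h}(t,v),\, v \in \mathbb{R}\}$; this is essentially immediate from the closed-form expression \eqref{eq:zeroord_approx_closed_beta_aw}, whose building blocks $\mathcal{N}(d_i(t,x,v))$, $\hat{h}(t,v)$ and $\gamma(t,T,v)$ are $C^\infty$ in $(t,x,v)$ in this region. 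Third, I would verify the integrability condition ensuring that the expectation in \eqref{eq:fk_nonzero_firstorder} is finite, noting that finiteness of $\mathbb{E}[T \wedge \tau_{\hat{h}}]$ is trivial because $T$ is deterministic.

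The main obstacle will be controlling the behavior of $\mathcal{L}_1 \hat{f}_0$ near the terminal time $t = T$. The payoff $(x-K)^+$ has a kink at $x = K$ which propagates into a blow-up of the mixed Greek $\partial^2 \hat{f}_0 / (\partial x\, \partial v)$ of order $(T-t)^{-1/2}$ as $t \uparrow T$, mirroring the standard short-time behavior of Black--Scholes sensitivities. The assumption $K \geq H_1$ plays its crucial role here: it guarantees that the kink at $x = K$ lies strictly in the interior of the pricing domain (since $\hat{h}(T,v) = H_1 \leq K$) and that the terminal payoff matches the boundary datum continuously at the corner $\bigl(T, \hat{h}(T,v), v\bigr)$, preserving the regularity required by the Feynman--Kac theorem. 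To make this quantitative I would differentiate \eqref{eq:zeroord_approx_closed_beta_aw} directly and use standard Gaussian density estimates to obtain a bound of the form $|\mathcal{L}_1 \hat{f}_0(u,x,v)| \leq C(T-u)^{-1/2}$ for $(x,v)$ in a compact set bounded away from $x = K$, which is integrable in $u$ over $[t,T]$. Combining this pointwise estimate with the boundedness of $(S_u^{t,v}, V_u^{t,v})$ up to $T \wedge \tau_{\hat{h}}$ on compact initial data (the volatility $V_u^{t,v}$ is deterministic and hence bounded by \eqref{eq:volatilityproc_degen}, while $S_u^{t,v}$ has finite moments as a geometric Brownian motion with bounded time-dependent volatility) will yield the finiteness of the expectation and thereby close the argument, with uniqueness inherited from the cited theorem.
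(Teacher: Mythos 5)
Your overall strategy --- represent $\hat f_1$ via the Feynman--Kac theorem of \ref{chap:ap_feynmankac} and reduce the work to checking its hypotheses --- is the same as the paper's, but the proposal misses the point on which the whole proof actually turns: the hypotheses of Theorem \ref{thm:feynmankac_cdirich_rubio} \emph{fail} for problem \eqref{eq:pde_firstorder_preFK} as stated, so there is nothing to ``verify'' and no direct application is possible. Concretely: (a) the second-order part of $\mathcal{L}_0$ is $\tfrac{x^2}{2}e^{2v}\partial^2_{xx}$ only --- there is no $\partial^2_{vv}$ term at order $\eps^0$ --- so the diffusion matrix in the two space variables $(x,v)$ has rank one and the local ellipticity condition (iii) is violated everywhere; (b) the coefficients $x^2e^{2v}$ and $a-\tfrac{c}{2}e^{2v}$, and the source $\rho x e^v\,\partial^2_{xv}\hat f_0$, grow exponentially in $v$, violating the linear/polynomial growth conditions (ii) and (vi); and (c) the theorem is stated on a cylindrical domain $[0,T]\times D$, whereas \eqref{eq:pde_firstorder_preFK} is posed on the time-dependent region $\{x>\hat h(t,v)\}$. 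Your proposal never mentions ellipticity or the shape of the domain, and asserts the smoothness and growth checks are ``essentially immediate,'' which is exactly where the argument breaks down.

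The paper's proof is built around circumventing these failures: the change of variables $y=x-\hat h(t,v)$ simultaneously flattens the boundary and exposes the fact that $v$ is a purely deterministic (transport) direction, so the problem reduces to a genuinely one-dimensional, nondegenerate diffusion $Y^{t,v}_u=S^{t,v}_u-\hat h(u,V^{t,v}_u)$ with $v$ as a parameter; the exponential growth in $v$ is then handled by localization, using the fact that the degenerate log-volatility path \eqref{eq:volatilityproc_degen} is a bounded deterministic function of $u$ on $[t,T]$; and the required smoothness of the candidate solution is checked directly from the explicit formula \eqref{eq:firstorder_closed} rather than from general parabolic regularity. Your observations about the role of $K\ge H_1$ (continuity of the data at the corner, hence control of $\partial^2_{xv}\hat f_0$ near $u=T$) and the integrable $(T-u)^{-1/2}$ singularity of the source are correct and do appear, in effect, in the paper's estimates, but they are secondary; without the dimension reduction and localization the cited theorem simply does not apply.
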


\begin{proof}
See \ref{chap:ap_prooflemma}.
\end{proof}

Notice that Lemma \ref{lem:fk_firstorder_1} in particular assures the existence and uniqueness of a classical solution to \eqref{eq:pde_firstorder_preFK}. This is a nontrivial issue \cite{heathschweizer2000, ghany2014,ghany2013} which is often ignored in the mathematical finance literature.

In the limit $H_1 \to 0$, the dominated convergence theorem assures that \eqref{eq:fk_nonzero_firstorder} converges to 
\[
\mathbb{E}\biggl[ \int_{t}^T e^{-r(u-t)} \mathcal{L}_1 \hat{f}_0(u,S_u^{t,v}, V_u^{t,v}) \, du \biggm| S_t^{t,v}\! = x \biggr].
\]
Unsurprisingly, this is the definition of the first-order term for vanilla options, cf.\ Privault and She \cite{privaultshe2016}.

\begin{table*}[b!]
\centering
\caption{Parameters in equations \eqref{eq:partialf0_compact} and \eqref{eq:firstorder_closed}. We take $A := 1 - {K \over K \vee H_1}$ and omit the arguments of the functions $\gamma(u,T,V_u^{t,v})$ and $\hat{H}(u)$.}
\def\arraystretch{1.45}
\begin{tabular}{c}
\begin{tabular}{cccc}
\hline
$j$ & $a_j$ & $\eta_j$ & $b_{j,0}$ \\
\hline
$1$ & $0$ & $1$ & ${\partial \gamma \over \partial v} \Bigl( 1 - {A \over \gamma^2}\Bigr)$ \\
$2$ & $(1+2\beta) {\partial \hat{H}^{2+2\beta} \over \partial v}$ & $-(1+2\beta)$ & $- A\, {\partial \gamma \over \partial v} {\hat{H}^{2+2\beta} \over \gamma^2} + (1+2\beta)\, \hat{H}^{2+2\beta} \Bigl( {\partial \gamma \over \partial v} + {2 \over \gamma} {\partial \log\hat{H} \over \partial v} \Bigr) + {A \over \gamma} {\partial \hat{H}^{2+2\beta} \over \partial v}$ \\
$3$ & $-2\beta K e^{-r(T-u)} {\partial \hat{H}^{2\beta} \over \partial v}$ & $-2\beta$ & $2\beta K e^{-r(T-u)} \hat{H}^{2\beta} \Bigl( {\partial \gamma \over \partial v} - {2 \over \gamma} {\partial \log\hat{H} \over \partial v} \Bigr)$ \\
\hline
\end{tabular} \\
\vspace{-3mm} \\
\begin{tabular}{ccccc}
\hline
$j$ & $b_{j,1}$ & $b_{j,2}$ & $\nu_j$ & $\kappa_j$ \\
\hline
$1$ & $-{\partial \gamma \over \partial v} {1 + A \over \gamma}$ & ${\partial \gamma \over \partial v} {A \over \gamma^2}$ & ${1 \over \gamma}$ & ${1 \over \gamma}\bigl[-\log(K \vee H_1) +r(T-u) + {\gamma^2 \over 2}\bigr]$\!\!\! \\
$2$ & \!\!$- {\hat{H}^{2+2\beta} \over \gamma} \Bigl\{\! A \Bigl( {\partial \gamma \over \partial v} + {2 \over \gamma} {\partial \log\hat{H} \over \partial v} \Bigr) +$\! {\small $(1+2\beta)$}\!\! ${\partial \gamma \over \partial v} \!\Bigr\}$ & $\hat{H}^{2+2\beta} {\partial \gamma \over \partial v} {A \over \gamma^2}$ & $-{1 \over \gamma}$ & ${1 \over \gamma}\bigl[\log\bigl({\hat{H}^2 \over K \vee H_1}\bigr) + r(T-u) + {\gamma^2 \over 2}\bigr]$ \\
$3$ & $2\beta K e^{-r(T-u)} \hat{H}^{2\beta} {\partial \gamma \over \partial v} {1 \over \gamma}$ & $0$ & $-{1 \over \gamma}$ & ${1 \over \gamma}\bigl[\log\bigl({\hat{H}^2 \over K \vee H_1}\bigr) + r(T-u) - {\gamma^2 \over 2}\bigr]$ \\
\hline
\end{tabular}
\end{tabular}
\label{tab:params_partialf0}
\end{table*}

The task is to derive an explicit form for the expected value \eqref{eq:fk_nonzero_firstorder}, which we may rewrite as
\begin{equation} \label{eq:fk_firstorder_rewrit}
\hat{f}_1(t, x, v) = \int_{t}^{T} e^{-r(u-t)} \rho e^{V_u^{t,v}} \int_{\hat{h}(u,V_u^{t,v})\!}^\infty w\, {\partial^2 \hat{f}_0 \over \partial x \partial v}(u,w, V_u^{t,v})\, \mathbb{Q}\Bigl[ S_u^{t,v} \in dw, \tau_{\hat{h}} > u \Bigm| S_t^{t,v}\! = x \Bigr] du.
\end{equation}
We claim that the joint law of $(S_u^{t,v}, \tau_{\hat{h}})$ is given by
\begin{align*} 
\nonumber \mathbb{Q}\Bigl[ & S_u^{t,v} \in dw,  \tau_{\hat{h}} > u \Bigm| S_t^{t,v}\! = x \Bigr] = \\
& \quad = {\mathds{1}_{\{w > \hat{H}(u)\}} \over \gamma(t,u,v)\, w} \Biggl[ n\biggl( {1 \over \gamma(t,u,v)} \bigl( \log w - \mu_1 \bigr) \biggr) - \Bigl({\hat{H}(t) \over x}\Bigr)^{2\beta} n\biggl( {1 \over \gamma(t,u,v)} \bigl( \log w - \mu_2 \bigr) \biggr) \Biggr]\, dw.
\end{align*}
where $\hat{H}(u) \equiv \hat{h}(u,V_u^{t,v})$, $\mu_i := \log x_i + r(u-t) - {1 \over 2} \gamma^2(t,u,v)$, $x_1 := x$ and $x_2 := {\hat{H}^2(t) \over x}$. See \ref{chap:ap_jointlaw} for the proof of this claim. Consequently, the inside integral in \eqref{eq:fk_firstorder_rewrit} equals
\begin{equation} \label{eq:f1_insideintegral}
\mathbb{E}\Bigl[ e^{W_1} {\partial^2 \hat{f}_0 \over \partial x \partial v}(u, e^{W_1}, V_u^{t,v})\, \mathds{1}_{\{W_1 > \log \hat{H}(u)\}} \Bigr] - \Bigl({\hat{H}(t) \over x}\Bigr)^{2\beta} \mathbb{E}\Bigl[ e^{W_2} {\partial^2 \hat{f}_0 \over \partial x \partial v}(u, e^{W_2}, V_u^{t,v})\, \mathds{1}_{\{W_2 > \log \hat{H}(u)\}} \Bigr]
\end{equation}
where $W_i \sim \text{Normal}\bigl(\mu_i, \gamma^2(t,u,v)\bigr)$. Now, by differentiation of \eqref{eq:zeroord_approx_closed_beta_aw} we have
\begin{equation} \label{eq:partialf0_compact}
e^W {\partial^2 \hat{f}_0 \over \partial x \partial v}(u,e^W,V_u^{t,v}) = \sum_{j=1}^3 \biggl[ a_j\, e^{\eta_{j\!}^{} W} \mathcal{N}\bigl(\nu_j W + \kappa_j \bigr) + \sum_{\ell=0}^2 b_{j,\ell} (\nu_j W + \kappa_j)^{\ell\,} e^{\eta_{j\!}^{} W} n\bigl(\nu_j W + \kappa_j\bigr) \biggr]
\end{equation}
where $a_j, \eta_j, b_{j,\ell}, \nu_j, \kappa_j$ are the functions given in Table \ref{tab:params_partialf0}.

If we substitute \eqref{eq:partialf0_compact} into \eqref{eq:f1_insideintegral}, we obtain a sum of expectations which can be analytically solved with the help of Lemmas \ref{lem:expect_closedform1} and \ref{lem:expect_closedform2} in \ref{chap:ap_expect_closedform}. We have thus proved that the first-order term admits the following explicit expression:

\begin{proposition} 
Let $\hat{f}_1(t,x,v)$ be the first-order term in the expansion \eqref{eq:pde_asymptoticexp}, and assume that $K \geq H_1$. Then
\begin{equation} \label{eq:firstorder_closed}
\begin{aligned}
& \hat{f}_1 (t, x, v) = \int_{t}^{T}\! e^{-r(u-t)} \rho\, e^{V_u^{t,v}} \times \\
& \quad\; \times \Biggl[ \sum_{j=1}^3 \biggl(a_j\, \Upsilon\bigl( \nu_j, \kappa_j, \eta_j; \mu_1, \gamma^2(t,u,v), \hat{L} \bigr) + \sum_{\ell=0}^2 b_{j,\ell}\, \Psi_\ell\bigl( \nu_j, \kappa_j, \eta_j; \mu_1, \gamma^2(t,u,v), \hat{L} \bigr) \biggr)\\
& \quad\;\;\;\; - \Bigl({\hat{H}(t) \over x}\Bigr)^{2\beta} \sum_{j=1}^3 \biggl(a_j\, \Upsilon\bigl( \nu_j, \kappa_j, \eta_j; \mu_2, \gamma^2(t,u,v), \hat{L} \bigr) + \sum_{\ell=0}^2 b_{j,\ell}\, \Psi_\ell\bigl( \nu_j, \kappa_j, \eta_j; \mu_2, \gamma^2(t,u,v), \hat{L} \bigr) \biggr) \Biggr] du
\end{aligned}
\end{equation}
for $t \in [0,T]$ and $x > \hat{h}(t,v)$. Here $\hat{L} \equiv \hat{L}(u) \equiv \log\hat{H}(u)$ with $\hat{H}(u) \equiv \hat{h}(u,V_u^{t,v})$;\, $\mu_i := \log x_i + r(u-t)$ $- {1 \over 2} \gamma^2(t,u,v)$ with $x_1 := x$ and $x_2 := {\hat{H}^2(t) \over x}$;\, $a_j, \eta_j, b_{j,\ell}, \nu_j, \eta_j$ are the functions of $u$ given in Table \ref{tab:params_partialf0};\, and $\Psi_\ell$, $\Upsilon$ are the functions defined respectively by equations \eqref{eq:expect_closedform1}, \eqref{eq:expect_closedform2} in \ref{chap:ap_expect_closedform}.
\end{proposition}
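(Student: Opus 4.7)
The plan is to follow the roadmap already sketched in the paragraphs preceding the statement. I would start from the Feynman--Kac stochastic representation of Lemma \ref{lem:fk_firstorder_1}: since the hypothesis $K \geq H_1$ is in force, that lemma asserts the existence and uniqueness of a classical solution of \eqref{eq:pde_firstorder_preFK}, so one may identify $\hat{f}_1$ with $\widetilde{f}_1$ as defined in \eqref{eq:fk_nonzero_firstorder}. Inserting the explicit form of $\mathcal{L}_1 = \rho\, x\, e^v\, \partial^2/\partial x \partial v$ from \eqref{eq:PDEexpansion_L0L1L2}, applying Fubini to interchange the time integration with the expectation, and re-expressing the stopped integrand via $\mathds{1}_{\{\tau_{\hat{h}} > u\}}$, yields the representation \eqref{eq:fk_firstorder_rewrit}.

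Next, I would invoke the joint density of $(S_u^{t,v}, \tau_{\hat{h}})$ derived in \ref{chap:ap_jointlaw}: the barrier function $\hat{h}$ defined in \eqref{eq:nonzerodrift_firstbarrierextbeta} is tailored precisely so that, after an appropriate time change, $\log\hat{H}(u)$ becomes an affine function of a drifted Brownian motion, and the classical reflection principle then delivers the two-Gaussian expression displayed immediately after \eqref{eq:fk_firstorder_rewrit}. Substituting this density into the inner integral of \eqref{eq:fk_firstorder_rewrit} reduces it to a difference of two truncated Gaussian expectations, namely \eqref{eq:f1_insideintegral}, the second one carrying the reflection factor $(\hat{H}(t)/x)^{2\beta}$.

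The main technical step is then the differentiation of the closed-form zero-order price \eqref{eq:zeroord_approx_closed_beta_aw} to produce \eqref{eq:partialf0_compact}. Because $K \geq H_1$ we have $K \vee H_1 = K$, hence $d_1, d_2$ are smooth functions of $(x,v)$ and the piecewise structure of the max disappears, making \eqref{eq:zeroord_approx_closed_beta_aw} twice differentiable in the classical sense. A mechanical but delicate calculation --- using the identity $x\, \partial d_i/\partial x = 1/\gamma$, differentiating each of the four source terms separately, exploiting the standard relation $\mathcal{N}'(z) = n(z)$, and collecting contributions proportional to $\mathcal{N}$ and to polynomials times $n$ --- delivers the three-index decomposition of \eqref{eq:partialf0_compact} with coefficients as tabulated in Table \ref{tab:params_partialf0}. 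I expect this to be the main obstacle of the proof, because for each of the three source terms in $\hat{f}_0$ the mixed second derivative generates several algebraically distinct contributions, and one must observe the cancellations that compress everything into a polynomial of degree at most $2$ in $\nu_j W + \kappa_j$.

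Finally, I would substitute \eqref{eq:partialf0_compact} into \eqref{eq:f1_insideintegral} term by term. Each of the resulting expectations has the form $\mathbb{E}\bigl[e^{\eta W}\mathcal{N}(\nu W + \kappa)\mathds{1}_{\{W > \hat{L}\}}\bigr]$ or $\mathbb{E}\bigl[(\nu W + \kappa)^\ell e^{\eta W} n(\nu W + \kappa)\mathds{1}_{\{W > \hat{L}\}}\bigr]$ with $W$ normally distributed with mean $\mu_i$ and variance $\gamma^2(t,u,v)$; these are exactly the integrals evaluated in Lemmas \ref{lem:expect_closedform1} and \ref{lem:expect_closedform2} of \ref{chap:ap_expect_closedform}, whose outputs are the functions $\Upsilon$ and $\Psi_\ell$ appearing in the statement. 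Collecting the $W_1$ and $W_2$ contributions according to \eqref{eq:f1_insideintegral}, with the reflection factor $(\hat{H}(t)/x)^{2\beta}$ attached to the $W_2$ block, reconstitutes the two inner sums in \eqref{eq:firstorder_closed} under the outer integral $\int_t^T e^{-r(u-t)}\rho\, e^{V_u^{t,v}}\,(\cdot)\,du$, completing the proof.
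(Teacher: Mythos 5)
Your proposal follows the paper's own derivation step for step: Feynman--Kac representation from Lemma \ref{lem:fk_firstorder_1}, rewriting via the joint law of $(S_u^{t,v},\tau_{\hat{h}})$ from \ref{chap:ap_jointlaw}, differentiation of the closed-form zero-order price to obtain \eqref{eq:partialf0_compact} with the Table \ref{tab:params_partialf0} coefficients, and term-by-term evaluation via Lemmas \ref{lem:expect_closedform1} and \ref{lem:expect_closedform2}. This is correct and essentially identical to the argument given in Subsection \ref{sec:firstorder}.
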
 \vspace{-2pt}

We observe that the numerical computation of the integral in \eqref{eq:firstorder_closed} is much easier than solving numerically the associated PDE problem \eqref{eq:pde_firstorder_preFK} or computing the expectation \eqref{eq:fk_nonzero_firstorder} via Monte Carlo simulation.

\subsection{Convergence of the asymptotic expansion} \label{sec:conv_proof}

Now that we derived an explicit expression for our first-order approximation \eqref{eq:pde_asymptoticexp}, it is time to demonstrate that it converges in the limit $\eps \to 0$. (The idea of the following proof is similar to that in Appendix B of Kato et al.\ \cite{kato2014}.)

Let us start by looking into the PDE problem which is satisfied by the remainder term of the first-order approximation. For $\eps > 0$, we define the remainder term as
\begin{equation} \label{eq:convproof_f2eps_def}
\hat{f}_2^\eps(t,x,v) := {1 \over \eps^2} \Bigl[ \hat{f}^{\eps}(t,x,v) - \Bigl(\hat{f}_0(t,x,v) + \eps \hat{f}_1(t,x,v)\Bigr) \Bigr].
\end{equation}
Then, $\hat{f}_2^\eps$ satisfies the terminal and boundary value problem
\begin{equation} \label{eq:convproof_pde1}
\begin{aligned}
\!\biggl({\partial \over \partial t} + \mathcal{L}^\eps\biggr) u = - g_2^\eps, && \;\;\;\; t \in [0,T],\, x > \hat{h}(t,v) \\[-2pt]
u(T,x,v) = 0, && x > \hat{h}(T,v) \\
u(t,\hat{h}(t,v),v) = 0, && t \in [0,T].
\end{aligned}
\end{equation}
where $\mathcal{L}^\eps$ is the partial differential operator from \eqref{eq:PDEexpansion_L0L1L2}, and the nonhomogeneity term is
\begin{equation} \label{eq:convproof_g2eps}
g_2^\eps(t,x,v) := \mathcal{L}_2 \hat{f}_0(t,x,v) + \bigl( \mathcal{L}_1 + \eps \mathcal{L}_2 \bigr) \hat{f}_1(t,x,v).
\end{equation}
This is easily seen to be true by recalling that the functions $\hat{f}^\eps$, $\hat{f}_0$ and $\hat{f}_1$ are the unique solutions of the terminal and boundary value problems \eqref{eq:pde_exactbarrierprice}, \eqref{eq:pde_zeroorder} and \eqref{eq:pde_firstorder_preFK}, respectively.

Next, we use a stochastic representation formula to define a candidate solution $\widetilde{f}_2^\eps$ for the PDE problem \eqref{eq:convproof_pde1}:
\begin{equation} \label{eq:convproof_f2stochrep}
\widetilde{f}_2^\eps(t,x,v) := \mathbb{E}\biggl[ \int_{t}^{T \wedge \tau_{\hat{h}}^{\eps}} e^{-r(u-t)} g_2^\eps(u,S_u^\eps, V_u^\eps)\, du \Bigm| S_{t}^\eps = x, V_{t}^\eps = v \Bigr]
\end{equation}
where $\tau_{\hat{h}}^{\eps} := \inf\{u \geq t: S_u^\eps \leq \hat{h}(u,V_u^\eps)\}$. We emphasize that the process $(S^\eps,V^\eps)$ in \eqref{eq:convproof_f2stochrep} follows the $2$-hypergeometric model \eqref{eq:2hyp_dyn_drift_smallvol} with $\eps > 0$; in particular, here $V_t^\eps$ is a nondeterministic process.

We intend to establish a growth estimate for our candidate solution $\widetilde{f}_2^\eps$. As a preliminary step, let us first obtain an upper bound for the growth of the function $g_2^\eps$ defined in \eqref{eq:convproof_g2eps}:

\begin{lemma} \label{lem:convproof_g2eps_est}
Assume that $K \geq H_1$. Then, the function $g_2^\eps$ satisfies the following growth condition: for any $\eps \geq 0$, there exist constants $C$, $k > 0$ such that
\[
|g_2^\eps(t,x,v)| \leq C \left(1 + |x|^{2k} + e^{2kv}\right)
\]
for all $t \in [0,T]$, $v \in \mathbb{R}$ and $x \geq \hat{h}(t,v)$.
\end{lemma}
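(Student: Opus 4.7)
The plan is to differentiate the closed-form expressions \eqref{eq:zeroord_approx_closed_beta_aw} for $\hat{f}_0$ and \eqref{eq:firstorder_closed} for $\hat{f}_1$ and to bound each resulting summand separately. The essential preliminary observation is that $\gamma^2(t,T,v) = \frac{1}{c}\log\bigl(1 + \frac{c}{2a}e^{2v}(e^{2a(T-t)}-1)\bigr)$ grows at most linearly in $v$ and that its first and second $v$-derivatives are uniformly bounded on $[0,T]\times\mathbb{R}$. Consequently, writing $\hat{h}(t,v) = H_1 e^{-r(T-t)+\frac{1+2\beta}{2}\gamma^2(t,T,v)}$ gives $\partial_v^k \hat{h}/\hat{h}$ bounded for $k=1,2$, so $\hat{h}(t,v)$, its $v$-derivatives of order up to two, and $1/\hat{h}(t,v)$ are all dominated by $C(1+e^{k_0 v})$ for some $k_0 > 0$ depending on $\beta$. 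Since we work in the region $x \geq \hat{h}(t,v)$, any negative power $x^{-\ell}$ is controlled by $\hat{h}^{-\ell}$ and hence by $C(1+e^{k_0 \ell v})$.

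The estimate for $\mathcal{L}_2 \hat{f}_0 = \tfrac{1}{2}\partial_v^2 \hat{f}_0$ then follows by direct differentiation of \eqref{eq:zeroord_approx_closed_beta_aw}: each of the four summands produces a finite sum of terms of the form $\textrm{(algebraic prefactor)}\times P(d_i)\mathcal{N}(d_i)$ or $\textrm{(algebraic prefactor)}\times P(d_i)\,n(d_i)$, where $P$ is a polynomial, the prefactor is a product of powers of $x$, $\hat{h}$ and bounded derivatives of $\gamma$, and the arguments $d_i$ depend smoothly on $v$. Using that $\mathcal{N}\leq 1$ and that $y\mapsto P(y)n(y)$ is bounded on $\mathbb{R}$, each term is dominated by its algebraic prefactor; after replacing negative powers of $x$ via $x \geq \hat{h}$ and applying the preliminary bounds on $\hat{h}^{\pm 1}$, this fits into $C(1+x^{2k}+e^{2kv})$ for a suitable $k$.

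For the remaining contributions $\mathcal{L}_1 \hat{f}_1 = \rho x e^v \partial_x \partial_v \hat{f}_1$ and $\mathcal{L}_2 \hat{f}_1 = \tfrac{1}{2}\partial_v^2 \hat{f}_1$, one starts from the representation \eqref{eq:firstorder_closed}, which expresses $\hat{f}_1$ as a time integral over $[t,T]$ of an integrand built from the same ingredients as $\partial_x \hat{f}_0$ together with the auxiliary functions $\Psi_\ell, \Upsilon$ of \ref{chap:ap_expect_closedform}, whose $x$- and $v$-derivatives carry Gaussian envelopes. Differentiation under the integral sign is therefore justified by dominated convergence, and each resulting summand is bounded exactly as in the preceding paragraph; the extra factor $xe^v$ in $\mathcal{L}_1$ is absorbed into the exponential part of the bound, and the length-$T$ integration interval contributes only a multiplicative constant.

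The main obstacle is essentially bookkeeping: the closed-form $\hat{f}_1$ contains more than a dozen terms and the sign of $\beta$ determines whether $\hat{h}$ or $1/\hat{h}$ is the dominant exponential as $v\to+\infty$; in every case the blow-up is exponential, so the exponent $k$ in the final estimate is chosen sufficiently large in terms of $|\beta|$. The only non-routine technicality is the justification of differentiation under the integral in \eqref{eq:firstorder_closed}, which follows from the super-polynomial decay of the Gaussian factors in $\Psi_\ell$ and $\Upsilon$.
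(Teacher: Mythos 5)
Your proposal follows essentially the same route as the paper, which likewise obtains $g_2^\eps$ by differentiating the explicit expressions for $\hat{f}_0$ and $\hat{f}_1$ and then carries out a term-by-term estimation (the details being deferred to the thesis). The one point worth sharpening is the regime $\gamma(t,T,v)\to 0$ (i.e.\ $t\to T$ or $v\to-\infty$): uniform boundedness of $\partial_v\gamma^2$ alone does not control prefactors such as $\partial_v\gamma/\gamma$ or $1/\gamma$ appearing in Table \ref{tab:params_partialf0}, but the stronger elementary bound $\partial_v\gamma^2\leq 2\gamma^2$ (from $y/(1+y)\leq\log(1+y)$) gives $\partial_v\gamma\leq\gamma$, and the hypothesis $K\geq H_1$ forces $A=0$ so that the $A/\gamma^2$ terms vanish --- together these make your ``dominated by the algebraic prefactor'' step go through uniformly in $t$.
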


\begin{proof}
We can obtain an explicit expression for the function $g_2^\eps$ by differentiating the expressions \eqref{eq:zeroord_approx_closed_beta_aw} and \eqref{eq:fk_firstorder_rewrit} of the zero and first-order terms respectively. After a tedious estimation procedure, the lemma follows. (See Appendix B in \cite{thesis}.)
\end{proof}

The next lemma provides the tool for transforming our growth estimate for $g_2^\eps$ into a growth estimate for the candidate solution $\widetilde{f}_2^\eps$:

\begin{lemma} \label{lem:convproof_momentest}
Let $(S^\eps, V^\eps)$ be the diffusion process with dynamics \eqref{eq:2hyp_dyn_drift_smallvol}. Then, for any $\eps \geq 0$, there exist constants $C$, $m > 0$ (which may depend on $k$) such that
\begin{align*}
\mathbb{E}\biggl[\sup_{t \leq u \leq T} \Bigl(\bigl|S_u^\eps\bigr|^{2k} + e^{2kV_u^\eps}\Bigr) \biggm| & \, S_t^\eps = x, V_t^\eps = v \biggr] \leq C \bigl(1 + |x|^{2m} + e^{2m v}\bigr)
\end{align*}
for all $t \in [0,T]$, $x > 0$ and $v \in \mathbb{R}$.
\end{lemma}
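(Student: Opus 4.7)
The plan is to bound the two terms $\sup_u |S_u^\eps|^{2k}$ and $\sup_u e^{2kV_u^\eps}$ separately and add the estimates.

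For the volatility, observe that the drift term $-\frac{c}{2}e^{2V^\eps}$ is always nonpositive. Integrating the SDE for $V^\eps$ therefore yields the pathwise comparison $V_u^\eps \leq v + a(u-t) + \eps\theta(W_u^2 - W_t^2)$ for $u \in [t,T]$, so $\sup_u e^{2kV_u^\eps} \leq e^{2k(v+|a|T)}\exp\bigl(2k\eps\theta\sup_u(W_u^2 - W_t^2)\bigr)$. By the reflection principle and the Gaussian moment generating function, the latter expectation is finite and independent of $v$, giving $\mathbb{E}[\sup_u e^{2kV_u^\eps}] \leq C e^{2kv}$.

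For the asset, I would use a Lyapunov argument on the joint process $(S^\eps,V^\eps)$. Define $\Phi(x,v):=x^{2k}(1+e^{2mv})$ with $m$ chosen large enough that $m\geq 1$ and $mc>k(2k-1)$. A direct computation shows that in $\mathcal{L}^\eps\Phi/\Phi$ the super-exponential contribution $-cm\,e^{2v}$ coming from $(a-\frac{c}{2}e^{2v})\partial_v\Phi$ dominates both the ``undesired'' $k(2k-1)e^{2v}$ from $\frac{1}{2}x^2 e^{2v}\partial_{xx}\Phi$ and, after Young's inequality applied to $e^v$, the cross term $4km\rho\eps\theta\, e^v$ from $\rho x e^v\eps\theta\,\partial_{xv}\Phi$. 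Hence $\mathcal{L}^\eps\Phi\leq C_1\Phi$ for some $C_1>0$. By Dynkin's formula and Gronwall's inequality, $\mathbb{E}_{(x,v)}[\Phi(S_u^\eps,V_u^\eps)]\leq\Phi(x,v)e^{C_1 T}$, which together with $x^{2k}e^{2mv}\leq 1+x^{4k}+e^{4mv}$ provides the pointwise moment bounds $\mathbb{E}[(S_u^\eps)^{2k}]+\mathbb{E}[(S_u^\eps)^{2k}e^{2mV_u^\eps}]\leq C(1+|x|^{2m'}+e^{2m'v})$ for a suitable $m'=m'(k)\geq m$.

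To pass from pointwise moments to the supremum, apply Itô's formula to $(S_u^\eps)^{2k}$ and take the sup over $u$ followed by expectation. The martingale part is controlled via the Burkholder--Davis--Gundy inequality, producing a term bounded above by $2kC\,\mathbb{E}\bigl[(\int_t^T e^{2V_s^\eps}(S_s^\eps)^{4k}ds)^{1/2}\bigr]$; factoring $(\sup_s (S_s^\eps)^{2k})^{1/2}$ out of the integrand and applying Cauchy--Schwarz together with AM--GM absorbs half of $\mathbb{E}[\sup(S^\eps)^{2k}]$ into the left-hand side. The remaining drift and coupling terms reduce to integrals of $\mathbb{E}[(S_s^\eps)^{2k}]$ and $\mathbb{E}[e^{2V_s^\eps}(S_s^\eps)^{2k}]$ (noting $e^{2v}\leq 1+e^{2mv}$ when $m\geq 1$), both of which are controlled by the pointwise Lyapunov estimate above. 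A final application of Gronwall's inequality yields the bound for $\mathbb{E}[\sup(S^\eps)^{2k}]$, and adding the volatility estimate completes the proof.

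The main obstacle is the strong coupling induced by the $e^{V^\eps}$ factor in the asset's diffusion, which causes a term $\mathbb{E}[e^{2V_s^\eps}(S_s^\eps)^{2k}]$ to appear in any Itô-based moment estimate. A naive Cauchy--Schwarz or Young-type decoupling of this term would require bootstrapping ever-higher moments of $S^\eps$ and would fail to close. The Lyapunov approach circumvents this by exploiting the super-exponential drift $-\frac{c}{2}e^{2v}$ specific to the 2-hypergeometric model, which is strong enough to simultaneously absorb both the polynomial growth of $(S^\eps)^{2k}$ and the exponential $e^{2V^\eps}$ coupling within a single functional of the joint process.
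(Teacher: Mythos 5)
Your proof is correct, and for the asset component it takes a genuinely different route from the paper. For the volatility your pathwise bound $V_u^\eps \leq v + a(u-t) + \eps\theta(W_u^2 - W_t^2)$ (valid because the drift term $-\tfrac{c}{2}e^{2V^\eps}$ is nonpositive) is in substance the same as the paper's ``comparison of $e^{2V^\eps}$ with a geometric Brownian motion,'' so that part coincides. For $\sup_u |S_u^\eps|^{2k}$, however, the paper works from the explicit stochastic-exponential representation $S_u^\eps = x\exp\bigl(r(u-t)-\tfrac{1}{2}\int_t^u e^{2V_s^\eps}ds+\int_t^u e^{V_s^\eps}dW_s^1\bigr)$ (and, in the underlying thesis, can exploit the closed form of $e^{2V^\eps}$, under which $\int_t^u e^{2V_s^\eps}ds$ is a logarithm of an integrated geometric Brownian motion and hence has all the exponential moments one needs); your Lyapunov argument with $\Phi(x,v)=x^{2k}(1+e^{2mv})$ instead absorbs the coupling term $k(2k-1)e^{2v}x^{2k}e^{2mv}$ and the Young-controlled cross term into the strongly negative drift contribution $-cm\,e^{2v}x^{2k}e^{2mv}$, which indeed works once $m\geq 1$ and $cm>k(2k-1)$ (the condition $m\geq 1$ is also what lets you dominate the stray term $k(2k-1)e^{2v}x^{2k}$ by $C\Phi$). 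Your approach is more robust --- it never uses the solvability of the model and would extend to any stochastic volatility dynamics with a sufficiently superlinear mean-reverting drift --- at the price of a slightly more delicate closing argument. Two routine points you should make explicit: the Dynkin/Gronwall step and, more importantly, the BDG step where you ``absorb half of $\mathbb{E}[\sup(S^\eps)^{2k}]$ into the left-hand side'' both require an a priori guarantee that the quantity being absorbed is finite, so they must be run on the processes stopped at a localizing sequence $\tau_n$, with the constants uniform in $n$, followed by monotone convergence (or Fatou). With that caveat the argument closes and yields the stated estimate with an exponent $m'(k)$ that is larger than the paper's but still independent of $(t,x,v)$ and of $\eps\in[0,1]$, which is all the convergence theorem needs.
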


\begin{proof}
The estimate for $\sup_{t \leq u \leq T} e^{2kV_u^\eps}$ is obtained by using Itô's formula to derive the dynamics of $Z^\eps = e^{2V^\eps}$ and then estimating the moments of the process $Z^\eps$ through a comparison with a geometric Brownian motion. Then, the estimate for $\sup_{t \leq u \leq T} \bigl|S_u^\eps\bigr|^{2k}$ can be derived from the closed-form expression
\[
S_u^\eps = x \exp\biggl( r(u-t) - {1 \over 2} \int_t^u e^{2V_s^\eps} ds + \int_t^u e^{V_s^\eps} dW_s^1 \biggr).
\]
(The full proof is in \cite{thesis}, pp.\ 36-37).
\end{proof}

Let us now use the results from Lemmas \ref{lem:convproof_g2eps_est} and \ref{lem:convproof_momentest} to derive the desired upper bound on the growth of the function $\widetilde{f}_2^\eps$ defined in \eqref{eq:convproof_f2stochrep}: for any $\eps \geq 0$, there exist constants $C$, $m > 0$ which do not depend on $(t,x,v)$ such that
\begin{equation} \label{eq:convproof_f2eps_est}
\begin{aligned}
|\widetilde{f}_2^\eps(t,x,v)| & \leq \int_{t}^T\!  e^{-r(u-t)} \mathbb{E}\Bigl[ |g_2^\eps(u,S_u^\eps, V_u^\eps)|\, \mathds{1}_{\{S_u^\eps \geq \hat{h}(u,V_u^\eps)\}} \!\Bigm|\! S_{t}^\eps = x, V_{t}^\eps = v \Bigr] du \\[3pt]
& \leq C_1 \int_{t}^T\Bigl( 1 + \mathbb{E}\Bigl[ (S_u^\eps)^{2k} + e^{2k V_u^\eps} \Bigm| S_{t}^\eps = x, V_{t}^\eps = v \Bigr] \Bigr) du \\[2pt]
& \leq C \bigl( 1 + |x|^{2m} + e^{2m v} \bigr)
\end{aligned}
\end{equation}
for all $t \in [0,T]$, $x > 0$, $v \in \mathbb{R}$.

The only thing that remains to be proved is that the function $\widetilde{f}_2^\eps$, which we defined as a candidate solution for the PDE problem \eqref{eq:convproof_pde1}, is indeed its unique solution. In fact, if we prove this, then it will follow that $\widetilde{f}_2^\eps$ equals the remainder term $\hat{f}_2^\eps$ defined in \eqref{eq:convproof_f2eps_def}, and the estimate \eqref{eq:convproof_f2eps_est} will assure the convergence of the first-order expansion.

\begin{lemma}
Assume that $K \geq H_1$ and fix $\eps > 0$. Then, the function $\widetilde{f}_2^\eps(t,x,v)$ defined in \eqref{eq:convproof_f2stochrep} is the unique solution of the terminal and boundary value problem \eqref{eq:convproof_pde1}.
\end{lemma}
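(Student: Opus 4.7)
The plan is to invoke the Feynman--Kac theorem for Cauchy--Dirichlet problems (Theorem \ref{thm:feynmankac_cdirich_rubio}) directly, following the same template used in the proof of Lemma \ref{lem:fk_firstorder_1}. Let $D := \{(t,x,v) \in [0,T] \times \mathbb{R}^+ \times \mathbb{R} : x > \hat{h}(t,v)\}$ denote the space--time domain. I will first show that $\widetilde{f}_2^\eps$ is a classical solution by verifying the hypotheses of that theorem, and then establish uniqueness via a localized Itô argument.

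Regarding the structural conditions: because $\eps > 0$ and $|\rho| < 1$, the operator $\mathcal{L}^\eps$ is uniformly elliptic on every bounded subdomain of $D$; the diffusion matrix of $(S^\eps, V^\eps)$ has determinant $(1-\rho^2)\eps^2 \theta^2 x^2 e^{2v} > 0$ for $x > 0$, and its coefficients are smooth in $(x,v)$. The lateral boundary $\{x = \hat{h}(t,v)\}$ is $C^\infty$ thanks to the explicit formula \eqref{eq:nonzerodrift_firstbarrierextbeta}, so the boundary-regularity hypothesis is met. Moreover, the SDE \eqref{eq:2hyp_dyn_drift_smallvol} admits a unique non-explosive strong solution: the mean-reverting term $-{c \over 2}e^{2v}$ dominates for large $v$ and the additive noise prevents $V^\eps$ from escaping to $-\infty$ in finite time, while $S^\eps$ inherits finite moments from the closed form recalled in the proof of Lemma \ref{lem:convproof_momentest}.

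Regarding the data: the forcing term $g_2^\eps$ in \eqref{eq:convproof_g2eps} is smooth on $D$, since it is obtained by applying the linear differential operators $\mathcal{L}_1$ and $\mathcal{L}_2$ to the closed-form expressions \eqref{eq:zeroord_approx_closed_beta_aw} and \eqref{eq:firstorder_closed}, both of which are $C^\infty$ on $D$. The growth estimate from Lemma \ref{lem:convproof_g2eps_est} (which is exactly where the hypothesis $K \geq H_1$ enters) combined with the moment bound from Lemma \ref{lem:convproof_momentest} makes the integrand in \eqref{eq:convproof_f2stochrep} integrable and of the correct polynomial--exponential order. The terminal and lateral data are identically zero, so the usual corner compatibility at $(T, \hat{h}(T,v), v)$ is automatic. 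Theorem \ref{thm:feynmankac_cdirich_rubio} then yields that $\widetilde{f}_2^\eps$ is a classical solution of \eqref{eq:convproof_pde1}. For uniqueness, I would take any classical solution $w$ with the growth bound \eqref{eq:convproof_f2eps_est}, apply Itô's formula to $e^{-r(s-t)} w(s, S_s^\eps, V_s^\eps)$ on $[t,\, T \wedge \tau_{\hat{h}}^\eps \wedge \tau_N]$ for a suitable localizing sequence $\tau_N \uparrow \infty$, and pass to the limit $N \to \infty$ using the moment estimates of Lemma \ref{lem:convproof_momentest} to justify dominated convergence and the vanishing of the martingale part in expectation; this gives $w(t,x,v) = \widetilde{f}_2^\eps(t,x,v)$.

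The main obstacle is aligning the precise technical hypotheses of Theorem \ref{thm:feynmankac_cdirich_rubio} with our unbounded space domain: in particular, marrying the polynomial-plus-exponential growth order of $g_2^\eps$ to whatever growth class the theorem permits for the forcing term, and ensuring that the non-explosion of $V^\eps$ (which is driven by the interaction between $a > 0$, the $-{c \over 2}e^{2v}$ drift and the additive noise) fits into that framework. A secondary delicate point is verifying that $g_2^\eps$ is smooth up to the boundary $\{x = \hat{h}(t,v)\}$, which depends on the smooth closed-form expressions for $\hat{f}_0$ and $\hat{f}_1$ and on the smoothness of $\hat{h}$ itself.
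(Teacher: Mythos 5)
Your overall strategy---stochastic representation via the Feynman--Kac theorem plus a localized It\^o argument for uniqueness---is the right template, but the step you defer to the end (``aligning the precise technical hypotheses of Theorem \ref{thm:feynmankac_cdirich_rubio} with our unbounded space domain'') is not a secondary technicality: it is the crux of the proof, and as written your argument does not close it. Two hypotheses of Theorem \ref{thm:feynmankac_cdirich_rubio} genuinely fail in the coordinates $(x,v)$. First, the growth conditions (ii) and (vi) are polynomial in the state variable, whereas the diffusion coefficient $x e^{v}$, the drift $a - \tfrac{c}{2}e^{2v}$, and the forcing term $g_2^\eps$ (whose bound from Lemma \ref{lem:convproof_g2eps_est} contains $e^{2kv}$) all grow exponentially in $v$; no choice of $K_1$, $K_2$, $k$ makes $e^{2v}x^2 \le K_1(1+x^2+v^2)$ or $e^{2kv} \le K_2(1+|x|^k+|v|^k)$. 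Second, the theorem is stated for a \emph{fixed} spatial domain $D \subset \mathbb{R}^d$ with the outside strong sphere property, while the region $\{x > \hat{h}(t,v)\}$ is time-dependent; treating $D$ as a space--time domain, as you do, is not covered by the statement. Simply asserting that the theorem ``then yields'' the conclusion therefore skips the essential work.

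The paper's proof resolves both failures by a change of variables before invoking the theorem: setting $z = e^{2v}$ converts the exponential growth in $v$ of the coefficients and of $g_2^\eps$ into polynomial growth in $z$ (so that the bound of Lemma \ref{lem:convproof_g2eps_est} becomes $C(1+|x|^{2k}+|z|^{k})$, compatible with hypothesis (vi)), and setting $y = x - \hat{h}(t,v)$ straightens the lateral boundary into the time-independent half-space $\{y>0\}$, so that the Cauchy--Dirichlet framework of Theorem \ref{thm:feynmankac_cdirich_rubio} applies to the restated problem. Your observations about uniform ellipticity on bounded subdomains (using $\eps>0$ and $|\rho|<1$), the smoothness of $g_2^\eps$ up to the boundary, and the zero terminal/lateral data giving automatic corner compatibility are all correct and survive the change of variables; but without that change of variables the argument does not go through, so the missing idea is precisely the substitution $z = e^{2v}$, $y = x - \hat{h}(t,v)$.
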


\begin{proof}
The key ingredient of the proof is to perform the change of variables $z=e^{2v}$ and $y= x - \hat{h}(t,v)$. It is then straightforward to show that the restated version of the problem is a consequence of the Feynman-Kac theorem for Cauchy-Dirichlet problems for parabolic PDEs. (See \cite{thesis}, pp.\ 38-41.)
\end{proof}

Summarizing, we have established the following convergence theorem:
\begin{theorem} 
Let $\hat{f}_0(t,x,v)$ and $\hat{f}_1(t,x,v)$ be, respectively, the zero and first-order term in the expansion \eqref{eq:pde_asymptoticexp} for the price $\hat{f}^\eps(t,x,v)$  of a DOC option with barrier function $\hat{h}(t,v)$ under the model \eqref{eq:2hyp_dyn_drift_smallvol}. Assume that $K \geq H_1$. Then, there exist positive constants $C$ and $m$ which are independent of $\eps \in [0,1]$ such that
\[
\Bigl| \hat{f}^\eps(t,x,v) - \Bigl( \hat{f}_0(t,x,v) + \eps \hat{f}_1(t,x,v) \Bigr) \Bigr| \leq C \left(1 + |x|^{2m} + e^{2mv} \right) \eps^2
\]
for all $t \in [0,T]$, $v \in \mathbb{R}$ and $x \geq \hat{h}(t,v)$.
\end{theorem}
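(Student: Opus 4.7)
The plan is pure bookkeeping: the three lemmas immediately preceding the theorem do essentially all of the work, and the theorem is their combination. By the definition \eqref{eq:convproof_f2eps_def} of the rescaled remainder $\hat{f}_2^\eps$, proving the stated inequality is equivalent to establishing a bound of the form
\[
|\hat{f}_2^\eps(t,x,v)| \leq C\bigl(1 + |x|^{2m} + e^{2mv}\bigr)
\]
for $t \in [0,T]$, $x \geq \hat{h}(t,v)$, $v \in \mathbb{R}$, with $C$ and $m$ independent of $\eps \in [0,1]$. So the theorem reduces to showing that the \emph{same} polynomial-exponential growth bound already proved for the candidate solution $\widetilde{f}_2^\eps$ in \eqref{eq:convproof_f2eps_est} transfers to the actual remainder.

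The identification step is immediate. As noted just after \eqref{eq:convproof_g2eps}, the remainder $\hat{f}_2^\eps$ satisfies the Cauchy–Dirichlet problem \eqref{eq:convproof_pde1}. The last lemma asserts that $\widetilde{f}_2^\eps$ is the \emph{unique} classical solution of that very problem. Hence $\hat{f}_2^\eps = \widetilde{f}_2^\eps$ on $\{(t,x,v) : x \geq \hat{h}(t,v)\}$, and plugging the bound \eqref{eq:convproof_f2eps_est} into the definition of $\hat{f}_2^\eps$ and multiplying through by $\eps^2$ yields the theorem.

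The only item that deserves verification is the uniformity of the constants in $\eps \in [0,1]$. This is tracked backwards through the feeder estimates: in Lemma \ref{lem:convproof_momentest} the moment bound for $(S^\eps,V^\eps)$ depends on $\eps$ only through the diffusion coefficient $\eps\theta$ of $V^\eps$, which is bounded by $\theta$ on $[0,1]$, and Lemma \ref{lem:convproof_g2eps_est} carries at most a linear $\eps$–dependence through the $\eps \mathcal{L}_2 \hat{f}_1$ term of $g_2^\eps$. Both estimates can therefore be taken with a single $\eps$–independent constant on $[0,1]$, and this uniformity propagates into \eqref{eq:convproof_f2eps_est}. There is no genuine obstacle remaining at this stage; the substantive difficulties — the tedious differentiation of the closed-form expressions \eqref{eq:zeroord_approx_closed_beta_aw} and \eqref{eq:firstorder_closed} to control $g_2^\eps$, the moment estimate for $e^{2kV_u^\eps}$ obtained through an Itô/comparison argument, and the change of variables $(y,z) = (x - \hat{h}(t,v),\,e^{2v})$ that transforms \eqref{eq:convproof_pde1} into a form to which the Cauchy–Dirichlet Feynman–Kac theorem of \ref{chap:ap_feynmankac} can be applied — have all been absorbed into the preceding lemmas.
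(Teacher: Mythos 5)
Your proposal is correct and is essentially the paper's own argument: the theorem is obtained by identifying the rescaled remainder $\hat{f}_2^\eps$ with the stochastic representation $\widetilde{f}_2^\eps$ via the uniqueness lemma for \eqref{eq:convproof_pde1}, and then transferring the growth estimate \eqref{eq:convproof_f2eps_est} built from Lemmas \ref{lem:convproof_g2eps_est} and \ref{lem:convproof_momentest}. Your explicit check that the constants can be taken uniformly over $\eps \in [0,1]$ is a welcome clarification, since the feeder lemmas as stated only assert constants for each fixed $\eps$.
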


\begin{remark}
To keep our analysis simple, the results of Subsections \ref{sec:firstorder} and \ref{sec:conv_proof} have been stated and proved for the regular DOC case, that is, for the case when $K \geq H_1$ and hence the option's payoff is continuous. The fact that the payoff of a (reverse) DOC with $K < H_1$ is discontinuous at the intersection of the terminal and boundary conditions introduces additional complications, but our results can be extended to this case through a regularization technique similar to that introduced by Papanicolaou et al.\ in \cite{papanicolaou2003}.

The technique relies on the fact that it is possible to choose suitable families of smooth payoffs $\phi_\delta(x)$ which approximate the discontinuous payoff $\phi(x) = (x-K)^+$ and converge pointwise to  $\phi(x)$ as $\delta \to 0$. The price $\hat{f}^{\eps, \delta}(t,x,v)$ of a barrier option with payoff $\phi_\delta(x)$ is defined by \eqref{eq:pde_exactbarrierprice} with  $\phi_\delta(x)$ substituting $(x-K)^+$ and, as in Subsection \ref{sec:expansion}, it can be asymptotically expanded as $\hat{f}^{\eps,\delta} = \hat{f}_0^\delta + \eps \hat{f}_1^\delta + \eps^2 \hat{f}_2^\delta + \ldots$. Using the same arguments from the regular DOC case, it is possible to derive a closed-form expression for the zero-order term $\hat{f}_0^\delta$ and the first-order term $\hat{f}_1^\delta$, and also to prove the convergence of the expansion for $\hat{f}^{\eps, \delta}(t,x,v)$. (In fact, for concreteness we have chosen the payoff $(x-K)^+$ in the terminal condition of \eqref{eq:pde_exactbarrierprice}; notwithstanding, the same techniques yield identical results when $(x-K)^+$ is replaced by any other continuous payoff. Hence the application of our method to the payoffs $\phi_\delta(x)$ provides approximations to the price of the reverse DOC option.)  The next step is to show that the results for the reverse DOC follow by taking the limit $\delta \to 0$; for instance, to obtain Lemma \ref{lem:fk_firstorder_1} one needs to prove that the solutions of $\bigl({\partial \over \partial t} + \mathcal{L}_0\bigr) \hat{f}_1^\delta = -\mathcal{L}_1 \hat{f}_0^\delta$ converge to the solution of \eqref{eq:pde_firstorder_preFK}, and that the stochastic representation formulas $\mathbb{E}\bigl[ \int_{t}^{T \wedge \tau_{\hat{h}}\!} e^{-r(u-t)} \mathcal{L}_1 \hat{f}_0^\delta(u,S_u^{t,v}, V_u^{t,v}) \, du \bigm| S_t^{t,v}\! = x \bigr]$ converge to \eqref{eq:fk_nonzero_firstorder}.

We point out that the family of approximating smooth payoffs can be defined so that each $\phi_\delta(x)$ is either an upper bound or a lower bound for $\phi(x)$ (so that the pointwise convergence as $\delta \to 0$ takes place either by above or by below, respectively). Each approximate reverse DOC option price $\hat{f}^{\eps,\delta}(t,x,v)$ then becomes, respectively, an upper bound or a lower bound for $\hat{f}^\eps(t,x,v)$. The difference between such upper and lower bounds is an estimate of the error which occurs when one approximates the reverse DOC payoff by a continuous payoff.

Further details are left for the interested reader.
\end{remark}

\subsection{Single and multi-stage approximations to constant barriers} \label{sec:multistage}

Recall that we have been assuming that the nonconstant barrier function is of the form \eqref{eq:nonzerodrift_firstbarrierextbeta}. For this reason, we have (in general) not been covering the case with greater practical interest, which is that of a barrier option with constant barrier $H$. Notwithstanding, an approximate pricing formula for an option with constant barrier can be obtained if the parameters of \eqref{eq:nonzerodrift_firstbarrierextbeta} are chosen in order that the time and volatility-dependent barrier function is as constant as possible.

Such choice of parameters should take into account the fact that our pricing strategy is based on a small vol of vol expansion which is performed around the noiseless limit $V_t^{t',v'}$ of the log-volatility process $V_t^\eps$. Therefore, if one wishes to compute the price of the option at time $t' \in [0,T]$ and the initial log-volatility is equal to $v'$, then the parameters $H_1$ and $\beta$ should be chosen such that $\hat{h}(t,V_t^{t',v'})$ is as close to the constant function $H$ as possible. The simplest choice is $H_1 = H$ and $\beta$ such that $\hat{h}(t',v') = H$, i.e.\ $\beta = {r(T-t') \over \gamma^2(t',T,v')} - {1 \over 2}$, but this choice can be improved by choosing the parameters in some optimal way (see e.g.\ page 3 of Rapisarda \cite{rapisarda2005}).

It should be noted that the two cases where the barrier function \eqref{eq:nonzerodrift_firstbarrierextbeta} can be chosen to be constant are the zero interest rate case (i.e, $r=0$) and the case where the initial volatility equals its invariant value (i.e, $v' = {1 \over 2} \log\bigl({2a \over c}\bigr)$). Otherwise, the choice $\beta = {r(T-t') \over \gamma^2(t',T,v')} - {1 \over 2}$ yields an approximation which is quite good for small maturities. For large maturities it is possible to improve the quality of the approximation through the multi-stage procedure which we describe next.

The idea of the multi-stage method is to resort to a stepwise procedure so as to generalize our pricing technique to the case of a piecewise-smooth barrier function which is of the form \eqref{eq:nonzerodrift_firstbarrierextbeta} in each subinterval of time. Specifically, inspired by the approach proposed in Section 3 of Dorfleitner et al.\ \cite{dorfleitner2008}, we now subdivide the interval $[t',T]$ into $n$ subintervals defined by $t' = T_0 < T_1 < \ldots < T_n = T$ and consider the continuous barrier function defined by
\begin{equation}
\hat{h}^{(n)} (t,v) := H_1 \exp\Bigl\{ -r(T-t) + \sum_{i=1}^n {1+2\beta_i \over 2}\, \mathds{1}_{\{t < T_i\}} \gamma^2\bigl(t \vee T_{i-1},T_i,V_{t \vee T_{i-1}}^{t, v}\bigr) \biggr\}
\label{eq:nonzerodrift_firstb_betamulti}
\end{equation}
which is piecewise of the form \eqref{eq:nonzerodrift_firstbarrierextbeta} in the sense that
\[
\hat{h}^{(n)}(t,v) = \hat{h}^{(n)}(T_i,V_{T_i}^{t,v}) \exp\left\{ -r(T_i-t) + {1+2\beta_i \over 2} \gamma^2(t,T_i,v) \right\}
\]
for $(t,v) \in [T_{i-1},T_i] \times \mathbb{R}$. Notice that if we set $\beta_i = \beta$ for all $i=1, \ldots, n$ we obtain \eqref{eq:nonzerodrift_firstbarrierextbeta}. But the idea here is to pick $\beta_1, \ldots, \beta_n$ so that $\hat{h}^{(n)}(t,v)$ is closer to $H$ than the single-stage barrier function $\hat{h}(t,v)$: our choice of $\beta_i$ should ensure that the barrier function is as constant as possible in the interval $[T_{i-1}, T_i]$. Much like in the single-stage approximation, the simplest choice is $H_1 = H$ and $\beta_i = {r(T_i-T_{i-1}) \over \gamma^2(T_{i-1},T_i,V_{T_{i-1}}^{t',v'})} - {1 \over 2}$.

In order to derive an explicit asymptotic pricing formula for the option with barrier function \eqref{eq:nonzerodrift_firstb_betamulti}, we take the exact price $\hat{f}^{(n)}(t,x,v)$, i.e.\ the solution of the PDE problem \eqref{eq:pde_exactbarrierprice} with $\hat{h}(t,v)$ replaced by $\hat{h}^{(n)}(t,v)$, and formally expand it as
\[
\hat{f}^{(n)}(t,x,v) = \hat{f}_0^{(n)}(t,x,v) + \eps \hat{f}_1^{(n)}(t,x,v) + \mathcal{O}(\eps^2)
\]
where the functions $\hat{f}_0^{(n)}$ and $\hat{f}_1^{(n)}$ satisfy \eqref{eq:regpert_PDE_system}. (Naturally, the nonconstant boundary conditions are now $\hat{f}_j^{(n)\!}(t,\hat{h}^{(n)}(t,v),v) = 0$ for $j=0,1$.)

The same argument from the single-stage framework shows that for our fixed initial time $t'$ and initial log-volatility $v'$, the zero-order term is again the price, under the same Black and Scholes model, of a DOC option with barrier $\hat{H}^{(n)}(t) \equiv \hat{h}^{(n)}(t,V_{t})$, i.e,
\begin{equation} \label{eq:zeroord_multistagedef}
\hat{f}_0^{(n)}(t,x) \equiv \hat{f}_0^{(n)}(t,x,V_t) = e^{-r(T-t)} \mathbb{E}\Bigl[ (S_T - K)^+\, \mathds{1}_{\{ m_{[t,T]} > 1 \}} \Bigm| S_t = x \Bigr]
\end{equation}
where $m_{[t_1,t_2]} := \min_{u \in [t_1,t_2]} {S_u \over \hat{H}^{(n)}(u)}$. (For simplicity, we are now writing $S_t$, $V_t$ instead of $S_t^{t',v'\!}$, $V_t^{t',v'\!}$.) The key observation here is that, by virtue of the tower property, the function defined in \eqref{eq:zeroord_multistagedef} satisfies
\[
\hat{f}_0^{(n)}(t,x) = e^{-r(u-t)} \mathbb{E}\Bigl[ \hat{f}_0^{(n)}(u,S_u)\, \mathds{1}_{\{m_{[t,u]} > 1\}} \Bigm| S_t = x \Bigr]
\]
for $t' \leq t \leq u \leq T$. But the barrier function $\hat{H}^{(n)}(t)$ is of the form \eqref{eq:nonzerodrift_approxbar_beta} in each subinterval $[T_{i-1},T_i]$; therefore, we can obtain an explicit expression for the zero-order term $\hat{f}_0^{(n)}(t',x)$ as follows:
\begin{enumerate}[leftmargin=5mm]
\item We represent $\hat{f}_0^{(n)}(T_{n-1},x)$ via the closed-form expression \eqref{eq:zeroord_approx_closed_beta_aw}, where $\hat{h}(t,v)$ becomes $\hat{H}^{(n)}(T_{n-1})$, $\beta$ is replaced by $\beta_n$ and $\gamma(t,T,v)$ is replaced by $\gamma(T_{n-1},T,V_{T_{n-1}})$.

\item For $i=n-2, \ldots, 0$, we explicitly write
\begin{equation} \label{eq:zeroord_multistage_iter}
\nonumber \hat{f}_0^{(n)}(T_i,x) = e^{-r(T_{i+1}-T_i)} \int_{\hat{H}^{(n)}(T_{i+1})}^\infty \hat{f}_0^{(n)}(T_{i+1},w)\, \mathbb{Q}\Bigl[ S_{T_{i+1}}\! \in dw,\, m_{[T_i,T_{i+1}]\!} < 1 \!\Bigm|\! S_{T_i} = x \Bigr]
\end{equation}
where, as shown in \ref{chap:ap_jointlaw},
\begin{equation} \label{eq:jointlaw_multistage}
\begin{aligned}
& \mathbb{Q}\Bigl[ S_{T_{i+1}} \in dw,  m_{[T_i,T_{i+1}]} < 1 \Bigm| S_{T_i} = x \Bigr] = \\
& \qquad\; = {\mathds{1}_{\{w > \hat{H}^{(n)}(T_{i+1})\}} \over \gamma(T_i,T_{i+1},V_{T_i})\, w} \Biggl[ n\biggl( {1 \over \gamma(T_i,T_{i+1},V_{T_i})} \bigl( \log w - \mu_1 \bigr) \biggr) \\
& \qquad\qquad\quad - \Bigl({\hat{H}^{(n)}(T_i) \over x}\Bigr)^{2\beta_{i+1}} n\biggl( {1 \over \gamma(T_i,T_{i+1},V_{T_i})} \bigl( \log w - \mu_2 \bigr) \biggr) \Biggr]\, dw
\end{aligned}
\end{equation}
with $\mu_i := \log x_i + r(T_{i+1}-T_i) - {1 \over 2} \gamma^2(T_i,T_{i+1},V_{T_i})$, $x_1 := x$ and $x_2 := {(\hat{H}^{(n)})^2(T_i) \over x}$.
\end{enumerate}

Eventually, we obtain a representation for $\hat{f}_0^{(n)}(t',x)$ as a multiple integral of an explicit function. Moreover, this integral representation formula for $\hat{f}_0^{(n)}(t',x)$ can be written in closed form in terms of the cumulative distribution function of the $n$-dimensional normal distribution --- see \ref{chap:ap_twostage}.

As for the first-order term, in analogy with Lemma \ref{lem:fk_firstorder_1}, we invoke the Feynman-Kac theorem and write it as
\[
\hat{f}_1^{(n)} (t, x) \equiv \hat{f}_1^{(n)}(t, x, V_t) = \int_{t}^{T} \mathbb{E}\biggl[ e^{-r(u-t)} \mathcal{L}_1 \hat{f}_0^{(n)}(u,S_u)\, \mathds{1}_{\{m_{[t,u]} > 1\}} \biggm| S_t\! = x \biggr]\, du.
\]
Note that the tower property now gives
\[
\hat{f}_1^{(n)}(t,x) = e^{-r(u-t)} \mathbb{E}\Bigl[ \hat{f}_1^{(n)}(u,S_u)\, \mathds{1}_{\{m_{[t,u]} > 1\}} \Bigm| S_t = x \Bigr] + \int_{t}^{u} \mathbb{E}\biggl[ e^{-r(\ell-t)} \mathcal{L}_1 \hat{f}_0^{(n)}(\ell,S_\ell)\, \mathds{1}_{\{m_{[t,\ell]} > 1\}} \biggm| S_t\! = x \biggr]\, d\ell
\]
for $t' \leq t \leq u \leq T$. We can obtain an explicit expression for $\hat{f}_1^{(n)}(t',x)$ through a stepwise procedure:
\begin{enumerate}[leftmargin=5mm]
\item $\hat{f}_0^{(n)}(T_{n-1},x)$ is obtained through the closed-form expression \eqref{eq:firstorder_closed}, where $\hat{h}(t,v)$ becomes $\hat{H}^{(n)}(T_{n-1})$, $\beta$ is replaced by $\beta_n$ and $\gamma(t,T,v)$ is replaced by $\gamma(T_{n-1},T,V_{T_{n-1}})$.

\item For $i=n-2, \ldots, 0$, we get
\begin{align*}
\hat{f}_1^{(n)}(T_i,x) & = \, e^{-r(T_{i+1}-T_i)} \int_{\hat{H}^{(n)}(T_{i+1})}^\infty \hat{f}_1^{(n)}(T_{i+1},w)\, \mathbb{Q}\Bigl[ S_{T_{i+1}} \in dw,\, m_{[T_i,T_{i+1}]} < 1 \Bigm| S_{T_i} = x \Bigr] \\[2pt]
& \; + \int_{T_i}^{T_{i+1}}\! e^{-r(u-T_i)} \int_{\hat{H}^{(n)}(u)}^\infty \mathcal{L}_1\hat{f}_0^{(n)}(u,w)\, \mathbb{Q}\Bigl[ S_u \in dw, m_{[T_i,u]} < 1 \Bigm| S_{T_i} = x \Bigr] du.
\end{align*}
where the joint laws are again of the form \eqref{eq:jointlaw_multistage}. The integrands are known from the previous steps, so this is an explicit integral representation formula which is amenable to numerical integration.
\end{enumerate}

It is worth pointing out that the justification of the validity of the Feynman-Kac theorem is somewhat more delicate in this multi-stage setting. We will not deal with the technicalities here, but we do note that the natural strategy to deal with the lack of global smoothness consists in applying the Feynman-Kac theorem sequentially in each interval $[T_{n-1},T], \ldots, [t',T_1]$.

As a final remark, let us mention that the choice of $n$ --- and in particular the choice between the single and the multi-stage methods --- should be a compromise between computational speed and numerical accuracy, depending on the practical problem at hand.

\section{Numerical examples} \label{chap:numerics}

To demonstrate the validity and the practical usefulness of the barrier option pricing technique proposed in this paper, we shall now compare the numerical values of the exact option price under the 2-hypergeometric model with the approximate prices obtained through the first-order approximation derived in Sections \ref{sec:zeroorder} and \ref{sec:firstorder}.

Table \ref{tab:benchmark_comparison_MCcomb} shows the values of the exact and approximate prices for various cases, corresponding to different combinations of the model parameters. The exact prices (in the ``Benchmark" column) were obtained via Monte Carlo simulation of the exact solution of the PDE problem \eqref{eq:pde_exactbarrierprice} with constant barrier $H$. The Brownian bridge technique was used to assure the unbiasedness of the estimator (cf.\ Section 1.1 of Gobet \cite{gobet2000}). Two different schemes were used for the discretization of the stochastic differential equation: the usual Euler-Maruyama discretization of the process $(\log S_t^\eps, V_t^\eps)$, and an alternative scheme based on the fact that, as shown in page 3 of Da Fonseca and Martini \cite{fonseca2015}, the explicit closed-form expression for the process $e^{2V_u^\eps}$ is given by
\[
e^{2V_u^\eps} = {e^{2v} A_u^\eps \over 1 + c\, e^{2v} \int_t^u A_s^\eps\, ds}, \qquad u \geq t,
\]
where $A_u^\eps = \exp\{2a(u-t) + 2\eps (W_u^2 - W_t^2) \}$ is a geometric Brownian motion. (In the alternative scheme, the Euler-Maruyama method is instead used to simulate the process $A_u^\eps$ and the discretized values of $V_u^\eps$ are then obtained in the natural way.) The results obtained through the two discretization schemes were verified to be consistent, the difference being less than two standard errors. Our benchmarks were calculated as an average of these values, and the corresponding Monte Carlo standard error is shown in parentheses. In turn, the first-order approximate solutions (in the ``$\hat{f}_0 + \eps \hat{f}_1$" column) were computed via the explicit expressions \eqref{eq:zeroord_approx_closed_beta_aw} and \eqref{eq:firstorder_closed}, with $H_1 = H$ and $\beta = {r(T-t) \over \gamma^2(t,T,v)} - {1 \over 2}$ as proposed in Subsection \ref{sec:multistage}. For comparative purposes, the associated zero-order approximation $\hat{f}_0$ is also shown, as well as the Black and Scholes barrier option price $f_{BS}$ with constant volatility $\sigma = e^v$. In the case $e^{2v} = 0.04$ the latter two coincide, as the log-volatility function \eqref{eq:volatilityproc_degen} and the barrier function \eqref{eq:nonzerodrift_approxbar_beta} become constant.

The results in Table \ref{tab:benchmark_comparison_MCcomb} indicate that the first-order asymptotic formula consistently yields accurate estimates of the true price of the DOC option under the 2-hypergeometric model. In particular, the first-order expansion correctly captures the fact that the barrier option price decreases when the parameter $\rho$, i.e.\ the correlation between the asset price and the volatility shocks, becomes more negative. The zero-order approximation, which is insensitive to the value of $\rho$, produces larger errors, especially when the asset and volatility processes have stronger negative correlation. As for the plain Black and Scholes pricing formula, it leads to substantial errors, namely when the value of the initial squared volatility $e^{2v}$ is ``unusual", i.e, differs significantly from its long-term mean value ${2a \over c} = 0.04$.

\begin{table*}[t!]
\centering
\small
\caption{Comparison between the approximate option prices and the Monte Carlo benchmarks with $10\,000\,000$ sample paths and $100\,000$ time discretization steps. The errors are relative errors with respect to the benchmark. ($\eps=0.1$; $c=10$; $a=0.2$; $r=0.01$; $K=104$; $T=1$; $t=0$; $x=100$.)}
\setlength{\tabcolsep}{5.5pt}
\def\arraystretch{1.45}
\begin{tabular}{cccp{0pt}cp{0pt}ccp{2pt}ccp{2pt}cc}
\hline
$\rho$ & $H$ & $e^{2v}$ && Benchmark && $\hat{f}_0 + \eps \hat{f}_1$ & Error && $\hat{f}_0$ & Error && $f_{BS}$ & Error \\
\hline
$-0.5$ & $90$ & $0.02$ && $4.2850$ {\scriptsize ($0.0026$)} && $4.2711$ & $-0.3247\%$ && $4.3272$ & $0.9852\%$ && $4.1220$ & $-3.8046\%$ \\
$-0.5$ & $90$ & $0.04$ && $5.5611$ {\scriptsize ($0.0037$)} && $5.5456$ & $-0.2781\%$ && $5.6098$ & $0.8758\%$ && $5.6098$ & $0.8758\%$  \\
$-0.5$ & $90$ & $0.08$ && $6.5967$ {\scriptsize ($0.0049$)} && $6.5956$ & $-0.016\%$ && $6.6539$ & $0.8676\%$ && $6.9259$ & $4.9902\%$ \\
$-0.5$ & $85$ & $0.02$ && $4.5671$ {\scriptsize ($0.0026$)} && $4.5502$ & $-0.3685\%$ && $4.5946$ & $0.6024\%$ && $4.3356$ & $-5.0674\%$ \\
$-0.5$ & $85$ & $0.04$ && $6.3506$ {\scriptsize ($0.0038$)} && $6.3391$ & $-0.1817\%$ && $6.4010$ & $0.7938\%$ && $6.4010$ & $0.7938\%$ \\
$-0.5$ & $85$ & $0.08$ && $8.0577$ {\scriptsize ($0.0052$)} && $8.0563$ & $-0.0179\%$ && $8.1268$ & $0.8565\%$ && $8.6135$ & $6.8973\%$ \\
$-0.7$ & $90$ & $0.02$ && $4.2604$ {\scriptsize ($0.0026$)} && $4.2486$ & $-0.276\%$ && $4.3272$ & $1.5684\%$ && $4.1220$ & $-3.2490\%$ \\
$-0.7$ & $90$ & $0.04$ && $5.5378$ {\scriptsize ($0.0036$)} && $5.5199$ & $-0.3223\%$ && $5.6098$ & $1.2998\%$ && $5.6098$ & $1.2998\%$ \\
$-0.7$ & $90$ & $0.08$ && $6.5799$ {\scriptsize ($0.0048$)} && $6.5723$ & $-0.1146\%$ && $6.6539$ & $1.1257\%$ && $6.9259$ & $5.2588\%$ \\
$-0.7$ & $85$ & $0.02$ && $4.5475$ {\scriptsize ($0.0026$)} && $4.5325$ & $-0.3302\%$ && $4.5946$ & $1.0347\%$ && $4.3356$ & $-4.6594\%$ \\
$-0.7$ & $85$ & $0.04$ && $6.3309$ {\scriptsize ($0.0037$)} && $6.3142$ & $-0.2632\%$ && $6.4010$ & $1.1068\%$ && $6.4010$ & $1.1068\%$ \\
$-0.7$ & $85$ & $0.08$ && $8.0341$ {\scriptsize ($0.0051$)} && $8.0281$ & $-0.0752\%$ && $8.1268$ & $1.1526\%$ && $8.6135$ & $7.2112\%$ \\
\hline
\end{tabular}
\label{tab:benchmark_comparison_MCcomb}
\end{table*}

The huge computational burden of the Monte Carlo algorithm used to obtain the benchmarks makes it infeasible for practical applications. In contrast, the evaluation of the first-order approximation takes less than half of a second when the Mathematica function \textit{NIntegrate} is used to compute the integral in \eqref{eq:firstorder_closed}. This computation time is also clearly lower to that of other numerical schemes such as finite elements \cite{zhang2014} or boundary elements \cite{guardasoni2016}. The first-order asymptotic expansion proposed in this paper therefore provides a fast way of obtaining barrier option prices which capture the common financial market phenomena of volatility randomness and mean reversion.

We note that there is room for further improving the performance of the first-order option prices. Indeed, the slight negative bias of the first-order approximations may be corrected through a more optimal choice of the parameter $\beta$ in the approximating barrier function \eqref{eq:nonzerodrift_firstbarrierextbeta}, or by switching to a suitable multi-stage barrier function.

\section{Conclusions} \label{chap:conclusions}

In this article we established an asymptotic pricing formula for barrier options under the 2-hypergeometric stochastic volatility model. Moreover, we showed that our asymptotic technique is not just formal, as it converges when the perturbation parameter tends to zero.

An important feature of our method is that our explicit pricing formula only requires the numerical evaluation of a definite integral whose integrand is known in closed form. This calculation is fast and suitable for practical uses, unlike the computationally intensive methods which are commonly used for numerically computing option prices under stochastic volatility.

Even though our barrier option pricing technique requires two approximation steps, our numerical examples indicate that the resulting error is quite small. We also proposed a multi-stage method which can be employed to improve the quality of the approximation. Our investigation therefore shows that the perturbation (asymptotic) methods, which were introduced in the option pricing literature by Fouque et al.\ (cf.\ \cite{fouque2000} and references therein), are a simple yet effective technique for the evaluation of barrier option prices under the 2-hypergeometric stochastic volatility model.

It would be interesting to investigate whether the price of other exotic options (including American-type options) under the 2-hypergeometric stochastic volatility model can also be computed via the small vol of vol expansion method proposed in this work. We leave this task for future research. 

\section*{Acknowledgments}

Most of this work was carried out while the first author was at Instituto Superior Técnico, Universidade de Lisboa. The third author was partly supported by Fundação para a Ciência e Tecnologia (FCT/MEC) through the project CEMAPRE - UID/MULTI/00491/2013.

\appendix

\let\apsection\section
\renewcommand\section[1]{
	\renewcommand{\thesection}{Appendix \Alph{section}}
	\apsection{#1}
	\renewcommand{\thesection}{\Alph{section}}
}

\section{The Feynman-Kac theorem for Cauchy-Dirichlet problems for parabolic PDEs} \label{chap:ap_feynmankac}

In this appendix we state a version of the Feynman-Kac theorem whose proof was given by Rubio in \cite{rubio2011}.

\begin{theorem} \label{thm:feynmankac_cdirich_rubio}
Let $D \subset \mathbb{R}^d$ be an open, connected and possibly unbounded set whose boundary $\partial D$ has the outside strong sphere property, and let $\lambda \in (0,1)$. Assume that:
\begin{enumerate}[itemsep=2.4pt]
\item[(i)] For all $n > 1$, the functions $\sigma^{ij}(t,x)$ and $b^i(t,x)$ are $\lambda$-H\"{o}lder continuous in $t$ and Lipschitz continuous in $x$ in the domain $\{(t,x): 0 \leq t \leq T, |x| \leq n\}$;

\item[(ii)] There exists $K_1$ such that
\[
\,\sum_{i,j=1\!}^d |\sigma^{ij}(t,x)|^2 + \sum_{i=1\!}^d |b^i(t,x)|^{2\!} \leq K_1\bigl( 1 + |x|^2 \bigr)
\]
for all $(t,x) \in [0,T] \times \mathbb{R}^d$;

\item[(iii)] Let $B \subset \overline{D}$ be any bounded, open, connected set. There exists $\theta(B) > 0$ such that
\begin{equation} \label{eq:feynmankac_rubio_ellipt}
\sum_{i,j=1}^d a^{ij}(t,x) \xi_i \xi_j \geq \theta(B)\, |\xi|^2
\end{equation}
for all $(t,x) \in [0,T] \times \overline{B}$, $\xi \in \mathbb{R}^d$, where $a(t,x) = \bigl[a^{ij}(t,x)\bigr]_{i,j=1, \ldots, n} := \sigma \sigma'(t,x)$;

\item[(iv)] The functions $c(t,x)$ and $g(t,x)$ are $\lambda$-H\"{o}lder continuous in $t$ and Lipschitz continuous in $x$ in the domain $\{(t,x): 0 \leq t \leq T, x \in \overline{D}, |x| \leq n\}$;

\item[(v)] There exists $c_0 \geq 0$ such that $c(t,x) \leq c_0$ for all $(t,x) \in [0,T] \times \overline{D}$;

\item[(vi)] There exist constants $K_2, k > 0$ such that $|g(t,x)| \leq K_2\bigl( 1 + |x|^k \bigr)$ for all $(t,x) \in [0,T] \times \overline{D}$;

\item[(vii)] The functions $\phi(x)$ and $\varphi(t,x)$ are continuous and satisfy the consistency condition $\phi(x) = \varphi(T,x)$, $x \in \partial D$;

\item[(viii)] There exist constants $K_3, k > 0$ such that $|\phi(x)| + |\varphi(t,x)| \leq K_3\bigl( 1 + |x|^k \bigr)$ for all $(t,x) \in [0,T] \times \overline{D}$.
\end{enumerate}

Then, the unique solution $u \in C([0,T] \times \overline{D})\, \cap$ $C_\textrm{\emph{loc}}^{1,2,\lambda}((0,T) \times D)$ of the Cauchy-Dirichlet problem
\begin{align*}
{\partial u \over \partial t} + {1 \over 2} \sum_{i,j=1}^d a^{ij}(t,x) {\partial^2 u \over \partial x_i \partial x_j} + \sum_{i=1}^d b^i(t,x) {\partial u \over \partial x_i} + c(t,x	) u & = g(t,x) \qquad\; (t,x) \in [0,T] \times D \\[-3pt]
u(T,x) & = \phi(x) \qquad\quad\, x \in D \\
u(t,x) & = \varphi(t,x) \qquad\; (t,x) \in [0,T] \times \partial D
\end{align*}
is given by
\begin{align*}
u(t,x) = & \, \mathbb{E}\Bigl[ e^{\int_t^{\tau} c(u,X_u)\, du} \varphi(\tau, X_{\tau}) \mathds{1}_{\{\tau < T\}} \Bigm| X_t = x\Bigr] \\[2pt]
& + \mathbb{E}\Bigl[ e^{\int_t^T c(u,X_u)\, du} \phi( X_T ) \mathds{1}_{\{\tau \geq T\}} \Bigm| X_t = x\Bigr] \\[1pt]
& - \mathbb{E}\biggl[ \int_t^{\tau} e^{\int_t^u c(s,X_s)\, ds} g(u,X_u)\, du \biggm| X_t = x \biggr].
\end{align*}
Here $\tau = \inf\{ u \geq t: X_u \notin D \}$, $X$ is the $d$-dimensional Markovian diffusion process with dynamics
\[ 
dX_t^i = b^i(t, X_t)\, dt + \sum_{j=1}^d \sigma^{ij} (t, X_t)\, dW_t^j, \quad\; i=1, \ldots, d
\]
and $C_\textrm{\emph{loc}}^{1,2,\lambda}((0,T) \times D)$ is the space of all functions such that they and all their derivatives up to the second order in $x$ and first order in $t$ are $\lambda$-H\"{o}lder continuous. Furthermore, $u$ satisfies the growth estimate
\[
\sup_{t \in [0,T]} |u(t,x)| \leq C(c_0,K_1,K_2,K_3,k) \bigl( 1 + |x|^k \bigr), \quad\; x \in \overline{D}.
\]
\end{theorem}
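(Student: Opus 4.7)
The plan is to establish the Feynman-Kac representation in two stages: first, construct a classical solution of the Cauchy-Dirichlet problem by purely analytic means; second, identify it with the stochastic expression via It\^o's formula combined with a careful localization argument.

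For existence and regularity of the classical solution, I would exhaust the (possibly unbounded) domain $D$ by an increasing sequence of bounded smooth subdomains $\{D_n\}$ with $\overline{D_n} \subset D_{n+1}$ and $\bigcup_n D_n = D$. On each cylinder $[0,T] \times \overline{D_n}$ the coefficients $a^{ij}$, $b^i$, $c$ are bounded and, by the local ellipticity hypothesis (iii), uniformly elliptic, with the $\lambda$-H\"older and Lipschitz regularity from (i) and (iv). Classical parabolic Schauder theory (Friedman, or Ladyzhenskaya--Solonnikov--Ural'tseva) then furnishes a unique solution $u_n \in C^{1,2,\lambda}([0,T] \times \overline{D_n})$ with the prescribed data on $\partial D_n \cap \partial D$ and a compatible choice on $\partial D_n \cap D$. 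The outside strong sphere property on $\partial D$ yields local barrier functions at every boundary point, so that continuity up to $\partial D$ is preserved in the limit. An a priori polynomial growth bound on $\{u_n\}$, obtained by constructing a supersolution of the form $C(1+|x|^k)e^{\lambda(T-s)}$ and using conditions (ii) and (v)--(viii), together with interior Schauder estimates (giving equicontinuity of $u_n$ and its derivatives on compact subsets of $(0,T) \times D$), permits a diagonal compactness argument to extract a limit $u \in C([0,T] \times \overline{D}) \cap C^{1,2,\lambda}_{\mathrm{loc}}((0,T) \times D)$ solving the PDE in the interior with the correct terminal and boundary data.

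For the probabilistic identification, I would apply It\^o's formula to
\[
Y_s := \exp\!\Bigl(\int_t^s c(r,X_r)\, dr\Bigr)\, u(s, X_s), \qquad s \in [t,\, T \wedge \tau],
\]
using the interior $C^{1,2}$ regularity of $u$. Substituting the PDE gives
\[
dY_s = \exp\!\Bigl(\int_t^s c(r,X_r)\, dr\Bigr)\, g(s,X_s)\, ds + dN_s,
\]
where $N$ is a local martingale (a stochastic integral in $dW$). Introduce the stopping times $\theta_n := \tau \wedge T \wedge \inf\{s > t : |X_s| \geq n\}$; on $[t,\theta_n]$ the integrand of $N$ is bounded, so $\mathbb{E}[N_{\theta_n}] = 0$ and
\[
u(t,x) = \mathbb{E}\bigl[Y_{\theta_n}\bigr] - \mathbb{E}\!\left[\int_t^{\theta_n} \exp\!\Bigl(\int_t^s c(r,X_r)\,dr\Bigr) g(s,X_s)\, ds\right].
\]
At $\theta_n$ there are three scenarios: $\theta_n = \tau < T$ (where $u(\tau,X_\tau) = \varphi(\tau, X_\tau)$), $\theta_n = T$ (where $u(T,X_T) = \phi(X_T)$), or $|X_{\theta_n}| = n$. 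The last event has vanishing probability as $n \to \infty$ by the second-moment bound on $\sup_{s \leq T} |X_s|$ coming from the sublinear-growth hypothesis (ii) and standard moment estimates for SDE solutions. Sending $n \to \infty$ and using (v), (vi), (viii), the polynomial-growth bound on $u$, and the moment bounds on $X$ to produce an integrable dominating function, dominated convergence delivers the stated representation. Uniqueness is then immediate because any classical solution must equal this explicit expected value, and the concluding growth estimate on $u$ follows by inserting the bounds from (vi), (viii) into the representation and invoking the polynomial moments of $X$.

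The hard part will be the first stage: producing a classical solution up to $\partial D$ in a possibly unbounded domain whose ellipticity is merely local, whose coefficients grow linearly in $x$, and whose data grow polynomially. Propagating the outside strong sphere property through the exhaustion so that continuity at $\partial D$ survives, and obtaining the a priori polynomial estimate uniformly in $n$ via a well-chosen supersolution, are the two points requiring the most care. Once this is in hand, the It\^o--localization argument of stage two is essentially routine, modulo verifying integrability of every term uniformly in $n$ for the dominated-convergence passage.
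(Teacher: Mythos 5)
The paper does not actually prove this theorem: the appendix states explicitly that it is quoted from Rubio \cite{rubio2011}, so there is no in-paper argument to compare you against. Your two-stage outline --- analytic construction of a classical solution by domain exhaustion and Schauder theory, followed by identification with the expectation via It\^o's formula and localization --- is the classical Friedman-style template and is viable, but it inverts the order of Rubio's construction, which \emph{defines} $u$ by the probabilistic formula, obtains the growth estimate from moment bounds on $X$, proves continuity up to $\partial D$ by barrier arguments at regular boundary points (this is precisely where the outside strong sphere property enters), and only then verifies the PDE locally by comparison with auxiliary problems on bounded subdomains where classical theory applies; uniqueness follows by applying It\^o's formula to an arbitrary polynomially bounded classical solution. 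The probabilistic-first route buys you exactly what is hardest in your version: it never introduces artificial boundaries.

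In your version the genuine soft spot is the exhaustion step. You must prescribe data on $\partial D_n \cap D$, show the limit is independent of that choice, and prove that attainment of $\varphi$ on $\partial D$ and of $\phi$ at $t=T$ survives the limit --- interior Schauder estimates on $(0,T)\times D$ give you neither --- which requires barrier and comparison arguments uniform in $n$; asserting that continuity ``is preserved in the limit'' is naming the problem, not solving it. Two smaller points to tighten in stage two: the exit through $\{|X_{\theta_n}|=n\}$ contributes a term of size $\mathbb{E}\bigl[(1+|X_{\theta_n}|^k)\,\mathds{1}_{\{|X_{\theta_n}|=n\}}\bigr]$, so you need $2k$-th moment bounds on $\sup_{s\le T}|X_s|$ together with Cauchy--Schwarz, not merely a second-moment bound; and your uniqueness conclusion is only valid within the class of solutions satisfying the polynomial growth estimate, since the It\^o argument needs that bound to dispose of the local-martingale term in the limit.
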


\section{Proof of Lemma \ref{lem:fk_firstorder_1}} \label{chap:ap_prooflemma}

Since the complete proof is quite lengthy, here we shall only present the main ideas of the proof of Lemma \ref{lem:fk_firstorder_1}. For further details, we refer to \cite{thesis}, pp.\ 31-34.

We begin by carrying out the change of variables $y=x-\hat{h}(t,v)$, which reduces the problem to that of showing that the function $\widetilde{f}_1^*(t,y,v) := \widetilde{f}_1(t,y+ \hat{h}(t,v),v)$ is the unique solution of the terminal and boundary value problem obtained by replacing the operator $\mathcal{L}_0$ by $\mathcal{L}_0^* := \mathcal{L}_0 - \Bigl({\partial \hat{h} \over \partial t}\!(t,v) + \bigl(a-{c \over 2}e^{2 v}\bigr) {\partial \hat{h} \over \partial v}\!(t,v)\Bigr) {\partial \over \partial x}$ and by changing the nonhomogeneity term accordingly. Notice that, as a result of this change of variables, the boundary of the problem becomes constant.

Since our problem does not satisfy neither the ellipticity assumption (iii) nor the growth restrictions (ii) and (vi) of Theorem \ref{thm:feynmankac_cdirich_rubio} of \ref{chap:ap_feynmankac}, the desired result does not follow directly from this theorem. Nevertheless, it turns out that our lemma can be obtained by performing some adaptations to the proof given in \cite{rubio2011}. Indeed, under the assumptions of Lemma \ref{lem:fk_firstorder_1}:
\begin{itemize}[itemsep=2.4pt]
\item The property in Remark 2.4 of \cite{rubio2011} can be deduced from the fact that, in our setting, the process $Y_u^{t,v} := S_u^{t,v} - \hat{h}(u,V_u^{t,v})$, interpreted as a one-dimensional diffusion, satisfies the ellipticity condition \eqref{eq:feynmankac_rubio_ellipt} in \ref{chap:ap_feynmankac}.

\item Properties (i), (ii) and (iii) in Proposition 2.5 of \cite{rubio2011} can be obtained from the closed-form expression
\[
Y_u^{t,v} = (y+\hat{h}(t,v)) \exp\biggl\{ r(u-t) - {1 \over 2}\gamma^2(t,u,v) + \int_t^u e^{V_s^{t,v}} dW_s^1 \biggr\} - \hat{h}(u,V_u^{t,v})
\]
(where $y = Y_t^{t,v}$). In addition, the inequality
\begin{equation} \label{eq:prooflemma_momentest}
\sup_{v \in [M_0,M_1]} \mathbb{E}\biggl[ \sup_{t \leq u \leq T} \bigl|Y_u^{t,v}\bigr|^{2r} \biggm| Y_{t}^{t,v} = y \biggr] \leq C(M_0,M_1,r)\left( 1 + |y|^{2r} \right)
\end{equation}
for $t \in [0,T]$, $y > 0$, $M_0 < M_1$ and $r\geq 1$, follows from Corollary 2.5.12 of Krylov \cite{krylov1980}. (Equation \eqref{eq:prooflemma_momentest} is a weaker version of property (iv) in Proposition 2.5 of \cite{rubio2011}.)

\item Equation \eqref{eq:prooflemma_momentest} provides a growth estimate for the moments of $Y_u^{t,v}$, and the nonhomogeneity term $g(t,y,v)$ satisfies
\[
\bigl|g(t,y,v)\bigr| \leq K_2^*(M_0,M_1) \bigl(1+|y|^k\bigr) \qquad \text{for all } t \in [0,T],\, v \in [M_0,M_1],\, y > 0
\]
(this estimate can be derived from the closed-form expression \eqref{eq:zeroord_approx_closed_beta_aw}). Straightforward modifications of the proofs of Lemma 3.3, Lemma 3.4 and Theorem 4.1 of \cite{rubio2011} yield that these results are also valid under our weaker growth estimates.

\item Theorem 5.4 of \cite{rubio2011} can be proved through a localization argument which relies on the fact that, for fixed $t$ and $v$, the degenerate log-volatility process \eqref{eq:volatilityproc_degen} is a bounded function of $u$ on the interval $[t,T]$.

\item The differentiability (cf.\ Section 4.2 of \cite{rubio2011}) of the stochastic representation formula $\widetilde{f}_1^*(t,y,v)$ can be proved by observing that $\widetilde{f}_1(t,x,v)$ admits the explicit expression given in the right hand side of \eqref{eq:firstorder_closed}, so that we may directly compute the derivatives and verify that they have the required continuity.
\end{itemize}
As in \cite{rubio2011}, the desired conclusion follows. \qed

\section{The joint law of a geometric Brownian motion with time-dependent volatility and the hitting time of a suitable barrier} \label{chap:ap_jointlaw}

Let $V(u)$ be some deterministic function defined for all $u \in [t',T]$, where $t'$ is the initial time and $T$ is the final time. Let $\gamma^2(u) = \int_{t'}^u e^{2V(\ell)} d\ell$.

Here we shall deduce the joint law of $(S_u,\tau)$, where $\{S_u\}_{u \in [t',T]}$ is a geometric Brownian motion with constant drift $r$ and time-dependent deterministic volatility $e^{V(u)}$, i.e,
\[
S_u = x \exp\biggl\{ r(u-t') - {1 \over 2} \gamma^2(u) + \int_{t'}^u e^{V(\ell)} dW_\ell \biggr\}, \quad\! u \in [t'\!,T]
\]
and $\tau = \inf\{ u \in[t',T]: S_u \leq \hat{H}(u) \}$ where
\[
\hat{H}(u) = H_1 \exp\left\{ -r(T-u) + {1 + 2\beta \over 2} \bigl(\gamma^2(T) - \gamma^2(u)\bigr) \right\}.
\]

Define
\[
Z_u := {S_u \over \hat{H}(u)} = z \exp\biggl\{\beta \gamma^2(u) + \int_{t'}^u e^{V(\ell)} dW_\ell \biggr\}, \quad\; u \in [t',T]
\]
where $z := {x \over \hat{H}(t')}$. Moreover, let
\[
A_s := Z_{\vartheta(s)} \qquad\quad \text{where} \;\; \vartheta(s) := \inf\{u \geq t': \gamma^2(u) \geq s\} \;\; \text{and} \;\; s \in [0,\gamma^2(T)].
\]
Since $\gamma^2(u)$ is a continuous and strictly increasing function, we have $\gamma^2(\vartheta(s)) = s$ and $\vartheta(\gamma^2(u)) = u$; therefore
\[
A_s = z \exp\bigl\{\beta s + \overline{W}_s \bigr\}, \quad\; s \in [0,\gamma^2(T)]
\]
where $\overline{W}_s := \int_{t'}^{\vartheta(s)} e^{V(\ell)} dW_\ell$ is (up to indistinguishability) a $\mathbb{Q}$-Brownian motion. The latter claim follows from the change of time theorem (e.g.\ Theorem 9.3 of Chung and Williams \cite{chungwilliams1990}); note that $\gamma^2(u)$ is the quadratic variation of the local martingale $\int_{t'}^u e^{V(\ell)} dW_\ell$.

Observe also that $\{\tau > u \} = \{\min_{\ell \in [t',u]} Z_\ell > 1 \} = \{\min_{s \in [0,\gamma^2(u)]} A_s > 1 \}$. So we can compute, for $u \in [t',T]$ and $c > \hat{H}(u)$,
\begin{align*}
\mathbb{Q}\bigl[ S_u > c, \tau > u \bigr] & = \mathbb{Q}\biggl[ A_{\gamma^2(u)} > {c \over \hat{H}(u)},\; \min_{s \in [0,\gamma^2(u)]} A_s > 1 \biggr] \\
& = \mathcal{N}\Biggl( {1 \over \gamma(u)} \biggl( \log\Bigl( {z \hat{H}(u) \over c} \Bigr) + \beta \gamma^2(u) \biggr) \Biggr) - z^{-2\beta} \mathcal{N}\Biggl( {1 \over \gamma(u)} \biggl( \log\Bigl( {\hat{H}(u) \over cz} \Bigr) + \beta \gamma^2(u) \biggr) \Biggr) \\
& = \mathcal{N}\biggl( -{1 \over \gamma(u)} \bigl( \log c - \mu_1 \bigr) \biggr) - \Bigl({\hat{H}(t') \over x}\Bigr)^{2\beta} \mathcal{N}\biggl( -{1 \over \gamma(u)} \bigl( \log c - \mu_2 \bigr) \biggr).
\end{align*}
where $\mu_i := \log x_i + r(u-t) - {1 \over 2} \gamma^2(u)$, $x_1 := x$ and $x_2 := {\hat{H}^2(t) \over x}$. In the second equality we have used the known law of a geometric Brownian motion and its running minimum, which can be found in Subsection 3.3.1 of Jeanblanc et al.\ \cite{jeanblanc2009}. Differentiating, we conclude that
\[ 
\mathbb{Q}\bigl[ S_u \in dc, \tau > u \bigr] = {\mathds{1}_{\{c > \hat{H}(u)\}} \over \gamma(u)\, c} \Biggl[ n\biggl( {1 \over \gamma(u)} \bigl( \log c - \mu_1 \bigr) \biggr) - \Bigl({\hat{H}(t') \over x}\Bigr)^{2\beta} n\biggl( {1 \over \gamma(u)} \bigl( \log c - \mu_2 \bigr) \biggr) \Biggr]\, dc.
\]
where $n(\cdot)$ is the standard normal probability density function.

\section{Closed-form expressions for the expectation of functions of a Normal random variable} \label{chap:ap_expect_closedform}

\begin{lemma} \label{lem:expect_closedform1}
Define, for $\ell=0,1,2$,
\[
\Psi_\ell(\nu, \kappa, \eta; \mu, \sigma^{2\!}, L) := \mathbb{E}\Bigl[ (\nu W + \kappa)^{\ell\,} e^{\eta W} n(\nu W + \kappa)\, \mathds{1}_{\{W>L\}} \Bigm| W \sim \emph{Normal}(\mu,\sigma^2) \Bigr].
\]
Then,
\begin{equation} \label{eq:expect_closedform1}
\begin{aligned}
\Psi_\ell(\nu, \kappa, \eta; \mu, \sigma^{2\!}, L) & = \zeta{s^2 \over \sigma} c_{1,\ell} \biggl[ s\, \mathcal{N}\biggl( {m - L \over s} \biggr) + (L-m)\, n\biggl( {m - L \over s} \biggr)\biggr] \\
& \;\; + \zeta{s^2\over \sigma} \bigl(c_{2,\ell} + 2c_{1,\ell} m\bigr)\, n\biggl( {m - L \over s} \biggr) + \zeta{s \over \sigma} \bigl(c_{3,\ell} + c_{2,\ell} m + c_{1,\ell} m^2\bigr)\, \mathcal{N}\biggl( {m - L \over s} \biggr)
\end{aligned}
\end{equation}
where
\begin{gather*}
\zeta = {1 \over \sqrt{2\pi}} \exp\Bigl\{ -{1 \over 2} \Bigl({\mu^2 \over \sigma^2} - {(\mu - \sigma^2 (\nu\kappa - \eta))^2 \over \sigma^2 (1 + \sigma^2 \nu^2)} + \kappa^2 \Bigl)\Bigr\},\\[2.5pt]
m = {\mu - \sigma^2 (\nu\kappa - \eta) \over 1 + \sigma^2 \nu^2}, \qquad s^2 = {\sigma^2 \over 1 + \sigma^2 \nu^2}, \\[2pt]
c_{1,0} = c_{1,1} = c_{2,0} = 0, \quad\;\; c_{1,2} = \nu^2, \quad\;\; c_{2,1} = \nu, \quad\;\; c_{2,2} = 2 \nu \kappa, \quad\;\; c_{3,0} = 1, \quad\;\; c_{3,1} = \kappa, \quad\;\; c_{3,2} = \kappa^2.
\end{gather*}
\end{lemma}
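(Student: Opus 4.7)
The plan is to reduce the evaluation of $\Psi_\ell$ to a moment computation for a single Gaussian random variable, obtained by completing the square in the combined exponent of the two Gaussian densities and the exponential factor.

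First I would write out the integrand explicitly:
\[
(\nu w+\kappa)^{\ell}\, e^{\eta w}\, n(\nu w+\kappa)\, \frac{1}{\sigma\sqrt{2\pi}}\, e^{-(w-\mu)^2/(2\sigma^2)} \;=\; \frac{(\nu w+\kappa)^{\ell}}{2\pi\sigma}\,\exp\!\Big\{\eta w - \tfrac{1}{2}(\nu w+\kappa)^2 - \tfrac{(w-\mu)^2}{2\sigma^2}\Big\},
\]
and collect coefficients of $w^2$ and $w$ in the exponent. The quadratic coefficient is $-\tfrac{1}{2}\bigl(\nu^2+\sigma^{-2}\bigr) = -\tfrac{1}{2s^2}$ with $s^2 = \sigma^2/(1+\nu^2\sigma^2)$, and the linear coefficient is $\eta-\nu\kappa+\mu/\sigma^2$, so that completing the square produces an exact Gaussian kernel in $w$ of mean $m=\bigl(\mu-\sigma^2(\nu\kappa-\eta)\bigr)/(1+\nu^2\sigma^2)$ and variance $s^2$, plus a $w$-independent residual. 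A direct computation of that residual yields $\sqrt{2\pi}\,\zeta$, matching the formula in the lemma. Hence the integrand becomes $\tfrac{\zeta s}{\sigma}\,(\nu w+\kappa)^{\ell}$ times the Normal$(m,s^2)$ density.

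Next I would evaluate
\[
\Psi_{\ell} \;=\; \frac{\zeta s}{\sigma}\,\mathbb{E}\bigl[(\nu W'+\kappa)^{\ell}\,\mathds{1}_{\{W'>L\}}\bigr], \qquad W'\sim\mathrm{Normal}(m,s^2),
\]
for $\ell=0,1,2$ by standard truncated-Gaussian identities. Writing $W'=m+sZ$ with $Z$ standard normal and $b=(L-m)/s$, one has $\mathbb{E}[\mathds{1}_{\{Z>b\}}]=\mathcal{N}(-b)$, $\mathbb{E}[Z\mathds{1}_{\{Z>b\}}]=n(b)$, and (by integration by parts, using $(-n(z))'=zn(z)$) $\mathbb{E}[Z^2\mathds{1}_{\{Z>b\}}]=b\,n(b)+\mathcal{N}(-b)$. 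Expanding $(\nu W'+\kappa)^{\ell}=\sum_{j}\binom{\ell}{j}\nu^{j}\kappa^{\ell-j}(W')^{j}$ and substituting the moments $\mathbb{E}[(W')^{j}\mathds{1}_{\{W'>L\}}]$ expressed in terms of $m,s,\mathcal{N}((m-L)/s),n((m-L)/s)$ (using symmetry of $n$) gives, after grouping terms, a combination of three types of contributions: a term proportional to $s\mathcal{N}((m-L)/s)+(L-m)n((m-L)/s)$ (coming from $\mathbb{E}[Z^2\mathds{1}_{\{Z>b\}}]$ rewritten as $\mathcal{N}(-b)-(-b)n(-b)$), a term proportional to $n((m-L)/s)$, and a term proportional to $\mathcal{N}((m-L)/s)$.

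Matching these contributions to the target formula reduces to identifying the coefficients $c_{j,\ell}$. A short bookkeeping check shows that the polynomial in $m$ appearing in the $\mathcal{N}$-coefficient is $c_{1,\ell}(m^2+s^2)+c_{2,\ell}m+c_{3,\ell}$ and the polynomial appearing as the $n$-coefficient (once the special first bracket is factored out) is $c_{2,\ell}+2c_{1,\ell}m$, which forces the values $c_{j,\ell}$ tabulated in the lemma — namely, for $\ell=0$ only $c_{3,0}=1$ survives, for $\ell=1$ we get $c_{2,1}=\nu$ and $c_{3,1}=\kappa$, and for $\ell=2$ we get $c_{1,2}=\nu^2$, $c_{2,2}=2\nu\kappa$, $c_{3,2}=\kappa^2$. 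The only mildly delicate step is the completion of the square and verification that the residual is exactly $\sqrt{2\pi}\,\zeta$; everything else is routine truncated-moment algebra.
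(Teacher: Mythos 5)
Your proposal is correct: the completion of the square does produce a Normal$(m,s^2)$ kernel with residual $\sqrt{2\pi}\,\zeta$, and the truncated-moment identities you quote yield exactly the tabulated coefficients (I checked $\ell=0,1,2$ explicitly). The paper itself gives no proof of this lemma, only a citation to Appendix A.1 of \cite{thesis}, and your argument is the standard computation that such a reference must contain, so this is essentially the same approach.
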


\begin{proof}
See Appendix A.1 of \cite{thesis}.
\end{proof} \vspace{4pt}

\begin{lemma} \label{lem:expect_closedform2}
Define
\[
\Upsilon(\nu, \kappa, \eta; \mu, \sigma^{2\!}, L) := \mathbb{E}\Bigl[ e^{\eta W} \mathcal{N}(\nu W + \kappa)\, \mathds{1}_{\{W>L\}} \Bigm| W \sim \emph{Normal}(\mu,\sigma^2) \Bigr].
\]
Then
\begin{equation} \label{eq:expect_closedform2}
\Upsilon(\nu, \kappa, \eta; \mu, \sigma^{2\!}, L) = \exp\biggl\{ {q_2^2 \over 4q_1} - q_3 \biggr\}\, \mathcal{N}_2\biggl( {q_2 \over \sqrt{2q_1}}, {2 q_1 q_5 - q_2 q_4 \over \sqrt{2q_1 (2q_1 + q_4^2)}}\, ; \, -{q_4 \over \sqrt{2q_1 + q_4^2}} \biggr)
\end{equation}
where
\[
\begin{gathered}
q_1 = {1 \over 2(1+	\sigma^2 \nu^2)}, \qquad q_2 = {\kappa + \nu(\mu+\sigma^2 \eta) \over 1+\sigma^2\nu^2},\\[3pt]
q_3 = {(\kappa + \mu \nu)^2 - 2\eta(\mu-\kappa \nu \sigma^2) - \eta^2 \sigma^2 \over 2(1+\sigma^2 \nu^2)},\\[3pt]
q_4 = {-\sigma \nu \over \sqrt{1 + \sigma^2 \nu^2}}, \quad\;\; q_5 = {\mu + \sigma^2 (\eta - \kappa \nu) - L(1+\sigma^2 \nu^2) \over \sqrt{\sigma^2 (1+\sigma^2 \nu^2)}}
\end{gathered}
\]
and $\mathcal{N}_2(\,\cdot\,, \cdot\,; \rho)$ denotes the cumulative distribution function of a bivariate normal random variable with zero means, unit variances and correlation $\rho$.
\end{lemma}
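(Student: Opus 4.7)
\textbf{Proof proposal for Lemma \ref{lem:expect_closedform2}.}

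The plan is to reduce $\Upsilon$ to an orthant-type probability for a bivariate normal random vector by (a) representing $\mathcal{N}(\nu W + \kappa)$ as an expectation over an auxiliary standard normal, (b) completing the square to absorb the exponential factor $e^{\eta W}$ into the Gaussian density, and (c) identifying the region of integration as a half-plane in $(U,Z)$-space determined by two linear functionals of independent standard normals, whose joint law is bivariate normal.

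\emph{Step 1 (expansion as a double integral).} Writing $\mathcal{N}(\nu W + \kappa) = \mathbb{P}(Z \leq \nu W + \kappa \mid W)$ with $Z$ an independent standard normal, Fubini's theorem yields
\[
\Upsilon = \frac{1}{2\pi\sigma} \int_L^\infty \!\! \int_{-\infty}^{\nu w+\kappa} \exp\!\Bigl( \eta w - \tfrac{(w-\mu)^2}{2\sigma^2} - \tfrac{z^2}{2}\Bigr)\, dz\, dw.
\]
Completing the square in $w$ in the exponent, one obtains $\eta w - (w-\mu)^2/(2\sigma^2) = -(w-\mu - \sigma^2\eta)^2/(2\sigma^2) + \mu\eta + \sigma^2\eta^2/2$, so with $\tilde\mu := \mu + \sigma^2\eta$ and the substitution $u = (w - \tilde\mu)/\sigma$,
\[
\Upsilon = e^{\mu\eta + \sigma^2\eta^2/2}\, \mathbb{P}\bigl( U > (L-\tilde\mu)/\sigma,\; Z \leq \nu\sigma U + \nu\tilde\mu + \kappa \bigr),
\]
where $(U,Z)$ is a pair of independent standard normals.

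\emph{Step 2 (recognition of a bivariate normal region).} The second constraint $Z - \nu\sigma U \leq \nu\tilde\mu + \kappa$ involves the linear combination $\xi := (Z - \nu\sigma U)/\sqrt{1+\sigma^2\nu^2}$, which is standard normal with $\mathrm{Corr}(-U,\xi) = \sigma\nu/\sqrt{1+\sigma^2\nu^2}$. Thus
\[
\Upsilon = e^{\mu\eta + \sigma^2\eta^2/2}\, \mathcal{N}_2\!\Bigl( \tfrac{\tilde\mu - L}{\sigma},\; \tfrac{\nu\tilde\mu + \kappa}{\sqrt{1+\sigma^2\nu^2}}\,;\, \tfrac{\sigma\nu}{\sqrt{1+\sigma^2\nu^2}} \Bigr).
\]

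\emph{Step 3 (matching to the stated form).} It remains to verify algebraically that the prefactor and the three arguments of $\mathcal{N}_2$ agree with those given in the lemma. Using the definitions of $q_1,\ldots,q_5$ one checks directly that $2q_1 + q_4^2 = 1$, so $-q_4/\sqrt{2q_1+q_4^2} = \sigma\nu/\sqrt{1+\sigma^2\nu^2}$ is the claimed correlation; that $q_2/\sqrt{2q_1} = (\nu\tilde\mu + \kappa)/\sqrt{1+\sigma^2\nu^2}$; that $(2q_1 q_5 - q_2 q_4)/\sqrt{2q_1(2q_1+q_4^2)} = (\tilde\mu - L)/\sigma$; and that $q_2^2/(4q_1) - q_3 = \mu\eta + \sigma^2\eta^2/2$. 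These identities are routine but tedious; rewriting everything over the common denominator $1+\sigma^2\nu^2$ makes the verification transparent.

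\emph{Expected obstacle.} The conceptual part (Steps~1 and~2) is standard; the main difficulty is the purely algebraic reconciliation in Step~3, since the quantities $q_1,\ldots,q_5$ in the statement are packaged in a non-obvious way. I would organize the bookkeeping by first proving the clean identity $2q_1 + q_4^2 = 1$ (which collapses several denominators) and then handling the prefactor and the two bounds one at a time, which keeps the algebra manageable.
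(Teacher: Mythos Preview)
Your proof is correct. It is, however, a genuinely different route from the paper's. The paper first writes $\mathcal{N}(\nu W+\kappa)=\int_{-\infty}^{0} n(\nu W+\kappa+z)\,dz$, swaps the integral with the expectation, and applies Lemma~\ref{lem:expect_closedform1} (the $\Psi_0$ formula) to the inner expectation $\mathbb{E}\bigl[e^{\eta W}n(\nu W+\kappa+z)\mathds{1}_{\{W>L\}}\bigr]$; this produces a one-dimensional integral $\int_{-\infty}^{0}\exp\{-q_1 z^2+q_2 z-q_3\}\,\mathcal{N}(q_4 z+q_5)\,dz$, which is then evaluated (via a separate auxiliary result) as a bivariate normal cdf. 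In that derivation the $q_i$ arise naturally as the coefficients of this intermediate integral, which explains their somewhat opaque packaging in the statement. Your argument bypasses Lemma~\ref{lem:expect_closedform1} entirely: the exponential tilt plus the auxiliary standard normal $Z$ gives the bivariate normal probability in one step, with transparent parameters $(\tilde\mu-L)/\sigma$, $(\nu\tilde\mu+\kappa)/\sqrt{1+\sigma^2\nu^2}$, $\sigma\nu/\sqrt{1+\sigma^2\nu^2}$. The trade-off is that your Step~3 must then reverse-engineer the paper's $q_i$ parametrization; the identity $2q_1+q_4^2=1$ that you single out is exactly the right lever, and the remaining checks (including the prefactor identity $q_2^2/(4q_1)-q_3=\mu\eta+\sigma^2\eta^2/2$) go through as you indicate. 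Note also that your two $\mathcal{N}_2$ arguments appear in the opposite order from the statement, which is harmless by the symmetry $\mathcal{N}_2(a,b;\rho)=\mathcal{N}_2(b,a;\rho)$.
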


\begin{proof}

\begin{align*}
& \mathbb{E}\Bigl[ e^{\eta W} \mathcal{N}(\nu W + \kappa)\, \mathds{1}_{\{W>L\}} \Bigm| W \sim \textrm{Normal}(\mu,\sigma^2) \Bigr] \\
& \qquad\qquad = \int_{-\infty}^0 \mathbb{E}\Bigl[ e^{\eta W} n(\nu W + \kappa + z)\, \mathds{1}_{\{W>L\}} \Bigm| W \sim \textrm{Normal}(\mu,\sigma^2) \Bigr]\, dz \\
& \qquad\qquad = {1 \over \sqrt{2\pi(1+\sigma^2 \nu^2)}} \int_{-\infty}^0 \exp\{q_1 z^2 + q_2 z + q_3\}\, \mathcal{N}(q_5 z + q_6)\, dz \\
& \qquad\qquad = \exp\biggl\{ {q_2^2 \over 4q_1} - q_3 \biggr\}\, \mathcal{N}_2\biggl( {q_2 \over \sqrt{2q_1}}, {2 q_1 q_5 - q_2 q_4 \over \sqrt{2q_1 (2q_1 + q_4^2)}}\, ; \, -{q_4 \over \sqrt{2q_1 + q_4^2}} \biggr).
\end{align*} 
The second equality follows from Lemma \ref{lem:expect_closedform1}, and the last equality follows from Appendix A.3 in \cite{thesis}.
\end{proof}

\section{A closed-form two-stage formula for the zero-order term} \label{chap:ap_twostage}

In order to illustrate that the integrals in the representation formula \eqref{eq:zeroord_multistage_iter} for $\hat{f}_0^{(n)}(t',x) \equiv \hat{f}_0^{(n)}(t',x,v')$ can be computed in closed form, here we focus on the case $n=2$, where \eqref{eq:zeroord_multistage_iter} gives
\[
\begin{aligned}
& \hat{f}_0^{(2)}(t',x) = e^{-r(T_1-t')} \int_{\hat{H}^{(2)}(T_1)}^\infty {1 \over \gamma(t',T_1,v')\, w} \times \\
& \qquad\quad \times \Biggl[ n\biggl( {1 \over \gamma(t',T_1,v')} \bigl( \log w - \mu_1 \bigr) \biggr) - \Bigl({\hat{H}^{(2)}(t') \over x}\Bigr)^{2\beta_1} n\biggl( {1 \over \gamma(t',T_1,v')} \bigl( \log w - \mu_2 \bigr) \biggr) \Biggr] \times \\
& \qquad\quad \times \Biggl[ w\, \mathcal{N}(d_1(T_1,w,V_{T_1})) - K e^{-r(T-T_1)} \mathcal{N}(d_2(T_1,w,V_{T_1})) \\
& \qquad\quad\quad\;\; - \biggl({\hat{H}^{(2)}(T_1) \over w}\biggr)^{\!2+2\beta_2} w\, \mathcal{N}(d_3(T_1,w,V_{T_1})) + \biggl({\hat{H}^{(2)}(T_1) \over w}\biggr)^{\!2\beta_2} K e^{-r(T-T_1)} \mathcal{N}(d_4(T_1,w,V_{T_1}))\Biggr] dw.
\end{aligned}
\]
Like in Subsection \ref{sec:firstorder}, let us rewrite this as
\[
\hat{f}_0^{(2)}(t',x) =  e^{-r(T_1-t')} \biggl( \mathbb{E}\Bigl[ F(W_1)\, \mathds{1}_{\{W_1 > \log \hat{H}^{(2)}(T_1)\}} \Bigr] - \Bigl({\hat{H}^{(2)}(t') \over x}\Bigr)^{2\beta_1} \mathbb{E}\Bigl[ F(W_2)\, \mathds{1}_{\{W_2 > \log \hat{H}^{(2)}(T_1)\}} \Bigr] \biggr)
\]
with $W_i \sim \text{Normal}\bigl(\mu_i, \gamma^2(t',T_1,v')\bigr)$ and
\begin{equation} \label{eq:ap_twostage_Ffun}
F(W) = \sum_{j=1}^4 a_j\, e^{\eta_{j\!}^{} W} \mathcal{N}\bigl(\nu_j W + \kappa_j \bigr)
\end{equation}
where $a_j, \eta_j, \nu_j, \kappa_j$ are the functions given in Table \ref{tab:params_ap_twostage_Ffun}. Combining this with Lemma \ref{lem:expect_closedform2}, we obtain a closed-form expression for $\hat{f}_0^{(2)}(t',x)$ in terms of the bivariate normal cumulative distribution function:
\begin{equation} \label{eq:ap_twostage_closed}
\begin{aligned}
\hat{f}_0^{(2)}(t',x) = e^{-r(T_1-t')} & \Biggl[\, \sum_{j=1}^4 a_j\, \Upsilon\bigl( \nu_j, \kappa_j, \eta_j; \mu_1, \gamma^2(t',T_1,v'), \hat{L} \bigr) \\
& \; - \Bigl({\hat{H}^{(2)}(t') \over x}\Bigr)^{2\beta_1} \sum_{j=1}^4 a_j\, \Upsilon\bigl( \nu_j, \kappa_j, \eta_j; \mu_2, \gamma^2(t',T_1,v'), \hat{L} \bigr) \Biggr]
\end{aligned}
\end{equation}
where $\hat{L} \equiv \log\hat{H}^{(2)}(T_1)$. (It can be verified numerically that this formula coincides with \eqref{eq:zeroord_approx_closed_beta_aw} in the particular case $\beta_1 = \beta_2 = \beta$, and also that it satisfies the standard monotonicity properties with respect to the barrier function.)

\begin{table*}[t!]
\centering
\normalsize
\caption{Parameters in equations \eqref{eq:ap_twostage_Ffun} and \eqref{eq:ap_twostage_closed}. The arguments of the functions $\gamma(T_1,T,V_{T_1})$ and $\hat{H}^{(2)}(T_1)$ are omitted.}
\def\arraystretch{1.45}
\begin{tabular}{ccccc}
\hline
$j$ & $a_j$ & $\eta_j$ & $\nu_j$ & $\kappa_j$ \\
\hline
$1$ & $1$ & $1$ & ${1 \over \gamma}$ & ${1 \over \gamma}\bigl[-\log(K \vee H_1) +r(T-T_1) + {\gamma^2 \over 2}\bigr]$\!\!\! \\
$2$ & $-K e^{-r(T-T_1)}$ & $0$ & ${1 \over \gamma}$ & ${1 \over \gamma}\bigl[-\log(K \vee H_1) +r(T-T_1) - {\gamma^2 \over 2}\bigr]$\!\!\! \\
$3$ & $-(\hat{H}^{(2)})^{2+2\beta_2}$ & $-(1+2\beta_2)$ & $-{1 \over \gamma}$ & ${1 \over \gamma}\bigl[\log\bigl({(\hat{H}^{(2)})^2 \over K \vee H_1}\bigr) + r(T-T_1) + {\gamma^2 \over 2}\bigr]$ \\
$4$ & $(\hat{H}^{(2)})^{2\beta_2} K e^{-r(T-T_1)}$ & $-2\beta_2$ & $-{1 \over \gamma}$ & ${1 \over \gamma}\bigl[\log\bigl({(\hat{H}^{(2)})^2 \over K \vee H_1}\bigr) + r(T-T_1) - {\gamma^2 \over 2}\bigr]$ \\
\hline
\end{tabular} 
\label{tab:params_ap_twostage_Ffun}
\end{table*}

For higher $n$, it is straightforwardly seen that, as a result of the successive $n-1$ integrations \eqref{eq:zeroord_multistage_iter}, the $n$-stage zero-order term $\hat{f}_0^{(n)}(t',x)$ can be written in terms of the cumulative distribution function of the $n$-dimensional normal distribution.

\renewcommand\section\apsection

\section*{References}

\bibliography{mybibfile2}

\end{document}